\numberwithin{equation}{section}
\numberwithin{figure}{section}
\newtheorem{thmabc}{Theorem}
\newtheorem{corabc}[thmabc]{Corollary}
\newtheorem{thm}{Theorem}[section]
\newtheorem*{thm*}{Theorem}
\newtheorem*{con*}{Conjecture}
\newtheorem{cor}[thm]{Corollary}
\crefname{thm}{Theorem}{Theorems}
\crefname{lem}{Lemma}{Lemmas}
\crefname{prop}{Proposition}{Propositions}
\crefname{equation}{Equation}{Equations}
\theoremstyle{definition}
\newtheorem{ex}[thm]{Example}
\DeclareMathOperator{\HS}{HS}
\DeclareMathOperator{\Mat}{Mat}
\DeclareMathOperator{\GL}{GL}
\DeclareMathOperator{\diag}{diag}
\DeclareMathOperator{\GSp}{GSp}
\newcommand{\bfx}{\mathbf{x}}
\newcommand{\bfX}{\bm{X}}
\newcommand{\bfY}{\bm{Y}}
\newcommand{\N}{\mathbb{N}}
\newcommand{\Q}{\mathbb{Q}}
\newcommand{\R}{\mathbb{R}}
\newcommand{\Z}{\mathbb{Z}}
\newcommand{\C}{\mathbb{C}}
\renewcommand{\phi}{\varphi}
\newcommand{\pbinom}[3][]{
    \genfrac{[}{]}{0pt}{}{#2}{#3}_{\ifthenelse{\isempty{#1}}{p}{#1}}
}
\renewcommand{\leq}{\leqslant}
\renewcommand{\geq}{\geqslant}
\renewcommand{\epsilon}{\varepsilon}
\def \mcH {\mathcal{H}}
\def \Fp {\mathbb{F}_p}
\def \Z {\mathbb{Z}}
\def \Zp {\mathbb{Z}_p}
\def \Q {\mathbb{Q}}
\def \N {\mathbb{N}}
\newcommand{\Smithpart}[1]{\bm{\nu}(#1)}
\newcommand{\Smithinc}[1]{\bm{\mu}(#1)}
\newcommand{\Hermite}[1]{\bm{\delta}(#1)}
\newcommand{\prim}{\mathrm{pr}}
\newcommand{\Satakegen}{R}
\newcommand{\HSsp}{\overline{\HS}}
\title{Ehrhart polynomials, Hecke series, and affine buildings}
\author[C. Alfes, J. Maglione, C. Voll]{Claudia
  Alfes\thanks{\href{mailto:alfes@math.uni-bielefeld.de}{alfes@math.uni-bielefeld.de}. Alfes
    was supported by the Daimler and Benz Foundation.}\addressmark{1}, \and Joshua
  Maglione\thanks{\href{mailto:joshua.maglione@universityofgalway.ie}{joshua.maglione@universityofgalway.ie}. Maglione was supported by the DFG-GRK 2297.}\addressmark{2}, \and Christopher
  Voll\thanks{\href{mailto:C.Voll.98@cantab.net}{C.Voll.98@cantab.net}. Alfes
    and Voll are funded by the Deutsche Forschungsgemeinschaft (DFG, German Research Foundation) — SFB-TRR 358/1 2023 — 491392403.}\addressmark{1}}
\address{
  \addressmark{1}Fakult\"at f\"ur Mathematik, Universit\"at Bielefeld, Germany \\ 
  \addressmark{2}School of Mathematical and Statistical Sciences, University of Galway, Ireland
}
\abstract{ Given a lattice polytope $P$ and a prime $p$, we define a
  function from the set of primitive symplectic $p$-adic lattices to
  the rationals that extracts the $\ell$th coefficient of the Ehrhart
  polynomial of $P$ relative to the given lattice. Inspired by work of
  Gunnells and Rodriguez Villegas in type $\mathsf{A}$, we show that
  these functions are eigenfunctions of a suitably defined action of
  the spherical symplectic Hecke algebra. Although they depend
  significantly on the polytope $P$, their eigenvalues are independent
  of $P$ and expressed as polynomials in $p$. We define local zeta
  functions that enumerate the values of these Hecke eigenfunctions on
  the vertices of the affine Bruhat--Tits buildings associated with
  $p$-adic symplectic groups. We compute these zeta functions by
  enumerating $p$-adic lattices by their elementary divisors and,
  simultaneously, one Hermite parameter. We report on a general
  functional equation satisfied by these local zeta functions,
  confirming a conjecture of Vankov.
}
\keywords{Ehrhart polynomials, Hecke series, affine buildings, Satake
  isomorphism, symplectic lattices}
\begin{document}

\maketitle
%% note that you DO NOT have to put your abstract here -- it is generated by \maketitle and the \abstract and \resume commands above

\section{Introduction}
\label{sec:intro}

Let $P$ be a fixed full-dimensional lattice polytope in $\R^n$, i.e.\ the convex hull
of finitely many points $V(P)$ in $\Lambda_0 = \Z^n$. Given a lattice $\Lambda$
such that $\Lambda_0\subseteq \Lambda \subseteq \Q^n$, we denote the
\emph{Ehrhart polynomial of $P$ with respect to $\Lambda$} by 
\begin{align}\label{eqn:Ehrhart-def}
  E^\Lambda(P) &= \sum_{\ell=0}^nc_\ell^\Lambda(P) T^n\in\Q[T].
\end{align} 
It is of interest to describe the variation of the coefficients
$c_\ell^\Lambda(P)$ with $\Lambda$ as compared to $c_{\ell}(P)=
c_{\ell}^{\Lambda_0}(P)$; write $E(P)$ for $E^{\Lambda_0}(P)$. For
$g\in \GL_n(\Q)\cap \Mat_n(\Z)$ we define
\begin{align*} 
  g\cdot P = \mathrm{conv}\{g\cdot v ~|~ v\in V(P)\} ,
\end{align*} 
which is again a lattice polytope. We write $\Lambda_g$ for the lattice
generated by the rows of $g\in\GL_n(\Q)$. Thus, for every $g\in \GL_n(\Q)\cap
\Mat_n(\Z)$ , we have
\begin{align}\label{eqn:Ehrhart-id}
  E(g\cdot P) &= E^{\Lambda_{g^{-1}}}(P).
\end{align}
We note that $\Lambda_g \subseteq \Z^n \subseteq \Lambda_{g^{-1}} \subseteq
\Q^n$ for $g\in \GL_n(\Q)\cap \Mat_n(\Z)$ with $|\det(g)| > 1$.  

Gunnells and Rodriguez Villegas~\cite{GRV/07} consider how the
coefficients of $E^{\Lambda}(P)$ from \Cref{eqn:Ehrhart-def} relate to
$E(P)$ for lattices $\Lambda$ such that $\Lambda_0\subseteq \Lambda
\subseteq p^{-1}\Lambda_0 \subseteq \Q^n$. In \Cref{sec:type-A} we
revisit these results from our perspective. In addition, we consider a
symplectic analogue of the work of Gunnells and Rodriguez Villegas.

\subsection{Zeta functions of Ehrhart coefficients}
\label{sec:intro-zeta}
For a prime $p$, we write $\Z_p$ for the ring of $p$-adic integers and
$\mathbb{Q}_p$ for its field of fractions.  Below we define, for each
$n\in\N=\{1,2,\dots\}$ and $\ell\in [2n]_0 = \{0, \dots, 2n\}$, local
zeta functions which we call \emph{Ehrhart--Hecke zeta
functions}. These functions are Dirichlet series in a complex variable
$s$ encoding the ratio of $\ell$th coefficients of the Ehrhart
polynomial of $P$, as the lattice $\Lambda$ varies among symplectic
lattices in~$\Q_p^{2n}$.

Recall the group scheme $\mathrm{GSp}_{2n}$ of symplectic
similitudes. For a ring $K$ its $K$-rational points are, with
$J=\left( \begin{smallmatrix} 0 & I_n \\ -I_n & 0 \end{smallmatrix}
\right)$,
\begin{align*}
  \mathrm{GSp}_{2n}(K) &= \left\{ A \in \GL_{2n}(K) ~\middle|~ A J A^{\mathrm{t}} = \mu(A)J,\ \text{for some } \mu(A)\in K^\times \right\} .
\end{align*}
%For an invertible matrix $g$, we set $\widetilde{g} = g \det(g)^{-1}$.

%We define a global zeta function as follows:
%\begin{align*} 
%  \mathcal{Z}^{\mathsf{C}}_{n,\ell}(s) &= \sum_{g} \dfrac{c_{\ell}^{\Lambda_{\w%idetilde{g}}}(P)}{c_{\ell}(P)} |\Lambda_{g^{-1}} : \Z^n|^{-s},
%\end{align*}
%where the sum runs over all $g\in (\mathrm{GSp}_{2n}(\Q) \cap
%\Mat_{2n}(\Z))/\mathrm{GSp}_{2n}(\Z)$.

We set $G_n = \mathrm{GSp}_{2n}(\Q_p)$, $\Gamma_n =
\mathrm{GSp}_{2n}(\Z_p)$, and $G_n^+ = \mathrm{GSp}_{2n}(\Q_p)\cap
\Mat_{2n}(\Z_p)$. The set $G^+_n/\Gamma_n$ is in bijection with the
set of special vertices of the affine building associated with the
group $\GSp_{2n}(\Q_p)$, which is of type $\widetilde{C}_n$.

We define the (\emph{local}) \emph{Ehrhart--Hecke
zeta function} (of type $\mathsf{C}$) as
\begin{align*} 
  \mathcal{Z}^{\mathsf{C}}_{n,\ell,p}(s) &= \sum_{g\in G^+_n/\Gamma_n} \dfrac{c_{\ell}^{\Lambda_{g^{-1}}}(P)}{c_{\ell}(P)} |\Lambda_{g^{-1}} : \Z_p^n|^{-s}. 
\end{align*}
Informally speaking, the zeta function
$\mathcal{Z}^{\mathsf{C}}_{n,\ell,p}(s)$ hence encodes the average
$\ell$th coefficient of the Ehrhart polynomial of $P$ across certain
symplectic lattices.

\subsection{Symplectic Hecke series}
\label{sec:Hecke-series}

The zeta functions of \Cref{sec:intro-zeta} are closely connected to
formal power series over the Hecke algebra associated with the pair
$(G_n^+, \Gamma_n)$. To explain this connection, we establish
additional notation. For $m\in\N$ we define
\begin{align*}
  D_n^{\mathsf{C}}(m) &= \{A \in G_n^{+} ~|~ AJA^{\mathrm{t}} = mJ \}.
\end{align*}
Let $\mcH^{\mathsf{C}}_p = \mcH^{\mathsf{C}}(G_n^+, \Gamma_n)$ be the
spherical Hecke algebra. The Hecke operator $T_n^{\mathsf{C}}(m)$ is
\begin{align*}
  T_n^{\mathsf{C}}(m) &= \sum_{g\in \Gamma_n \setminus D_n^{\mathsf{C}}(m) / \Gamma_n} \Gamma_ng\Gamma_n. 
\end{align*}
The (formal) symplectic Hecke series is defined as
\begin{align}\label{eqn:Hecke-series}
  \sum_{\alpha\geq 0} T_n^{\mathsf{C}}(p^{\alpha})X^{\alpha} \in \mathcal{H}_p^{\mathsf{C}}\llbracket X \rrbracket.
\end{align}
Shimura's conjecture \cite{Shimura/63} that the series in
\eqref{eqn:Hecke-series} is a rational function in $X$ was proved by
Andrianov~\cite{Andrianov/69}. Explicit formulae, however, seem only
to be known for $n \leq 4$; see \cite{Vankov/11}.

We consider the image of the Hecke series in \eqref{eqn:Hecke-series}
under the Satake isomorphism $\Omega : \mcH^{\mathsf{C}}_p \to
\C[x_0^{\pm 1}, \dots, x_n^{\pm 1}]^W$ mapping onto the ring of
invariants of $W$, the Weyl group of $G_n$. For variables $\bm{x} =
(x_0,\dots, x_n)$, we define the (\emph{local}) \emph{Satake
generating function} as
\begin{align*} 
  \Satakegen_{n,p}(\bm{x}, X) &= \sum_{\alpha\geq 0}
  \Omega(T_n^{\mathsf{C}}(p^{\alpha}))X^{\alpha} \in \C[
    \bfx^{\pm1}]\llbracket X \rrbracket.
\end{align*}
and the (\emph{local}) \emph{primitive local Satake generating
function} as
\begin{align}\label{eqn:prim-Satake} 
  \Satakegen_{n,p}^{\prim}(\bm{x}, X) &= \left(1 - x_0X\right)\left(1
  - x_0x_1\cdots x_nX\right)\Satakegen_{n,p}(\bm{x}, X).
\end{align} 

We write $V(\mathscr{X}_n)$ for the set of vertices of
$\mathscr{X}_n$, the affine building $\mathscr{X}_n$ of type
$\widetilde{\mathsf{A}}_{n-1}$ associated with the group
$\mathrm{GL}_{n}(\Q_p)$, viz.\ homothety classes of full lattices
in~$\mathrm{GL}_{n}(\Q_p)$.  In~\cite[Section~3.3]{Andrianov/87}
Andrianov shows, in essence, that $\Satakegen_{n,p}^\prim$ can be
interpreted as a sum over $V(\mathscr{X})$; see
\Cref{thm:Andrianov-formula} below.

For a lattice $\Lambda\leq \Z_p^n$, set $\Smithpart{\Lambda} = (\nu_1
\leq \cdots \leq \nu_n)\in\N_0^n$ if $\Z_p^n/\Lambda \cong
\Z/p^{\nu_1} \oplus \cdots \oplus \Z/p^{\nu_n}$. Setting $\nu_0=0$, we
define
$$
  \bm{\mu}(\Lambda) = (\mu_1,\dots,\mu_n) = (\nu_n - \nu_{n-1},\ \dots,\ \nu_1 - \nu_0).
$$
% The partition
% $\Smithpart{\Lambda} = (\nu_1, \dots, \nu_n)$ may also be written as
% \begin{align}\label{eqn:Smith-incr}
%   (\nu_n, \dots, \nu_1) &= (\underbrace{\underbrace{r_{i_1} + \cdots + r_{i_{\ell}}, \dots, r_{i_1} + \cdots + r_{i_{\ell}}}_{i_1}, \dots, r_{i_{\ell}}, \dots, r_{i_{\ell}}}_{i_{\ell}}, 0, \dots, 0) \in \N_0^n
% \end{align}
% for some $I = \{i_1, \dots, i_{\ell}\}_<\subseteq [n]$, and we write
% $\Smithinc{\Lambda} = (r_{i})_{i\in I}$. 

 Having chosen a $\Z_p$-basis of $\Zp^n$ we associate to each lattice
 $\Lambda\leq \Z_p^n$ a unique matrix
\begin{align}\label{eqn:HermiteNF}
  M_{\Lambda} &= \begin{pmatrix} 
    p^{\delta_1} & m_{12} & \cdots & m_{1n} \\
    & p^{\delta_2} & \cdots & m_{2n} \\
    & & \ddots & \vdots \\
    & & & p^{\delta_n} 
  \end{pmatrix} \in \Mat_n(\Z_p),
\end{align} 
whose rows generate $\Lambda$ and with $0\leq v_p(m_{ij}) \leq
\delta_j$ for all $1\leq i < j \leq n$. The matrix $M_{\Lambda}$ in
\eqref{eqn:HermiteNF} is said to be in Hermite normal form. We set
$\Hermite{\Lambda} = (\delta_1,\dots, \delta_n)$. Clearly each
homothety class $[\Lambda]$ contains a unique representative
$\Lambda_{\mathrm{m}}\leq \Z_p^{n}$ such that
$p^{-1}\Lambda_{\mathrm{m}} \not\leq \Z_p^{n}$.

\begin{thm}[Andrianov]\label{thm:Andrianov-formula}
  Let $n\in\N$, $\bm{a} = (1, 2, \dots, n)\in \N^n$, $\bm{d} = (n,
  n-1, \dots, 1)\in \N^n$, and let $\langle, \rangle$ be the usual dot
  product. Then
 \begin{align*}
    \Satakegen_{n,p}^{\prim}(\bm{x}, X) &= \sum_{[\Lambda]\in
      V(\mathscr{X}_n)} p^{\langle \bm{d}, \Smithpart{\Lambda_{\mathrm{m}}}\rangle
      - \langle \bm{a}, \Hermite{\Lambda_{\mathrm{m}}}\rangle}
    x_1^{\delta_1(\Lambda_{\mathrm{m}})} \cdots
    x_n^{\delta_n(\Lambda_{\mathrm{m}})}(x_0X)^{\nu_{n}(\Lambda_{\mathrm{m}})}.
  \end{align*} 
\end{thm}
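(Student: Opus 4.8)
The plan is to unfold the definition of $\Omega$ into an explicit weighted sum over left cosets in $\Gamma_n\backslash G_n^+$, to reinterpret this as an enumeration of symplectic lattices, and then to project onto the fixed Lagrangian $\Q_p^n$ so as to reindex the sum by the vertices $[\Lambda]\in V(\mathscr{X}_n)$. The two factors in \eqref{eqn:prim-Satake} are reserved for the very end: they convert the resulting (infinite) series into one indexed by the distinguished primitive representatives $\Lambda_{\mathrm{m}}$.

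First I would make $\Omega$ concrete by computing it in stages along the Siegel parabolic. Write $P=MN$ for the Siegel parabolic of $\GSp_{2n}$, with Levi $M\cong\GL_n\times\GL_1$ embedded as $(A,\mu)\mapsto\diag(A,\mu A^{-\mathrm{t}})$ and unipotent radical $N=\{\left(\begin{smallmatrix}I&S\\0&I\end{smallmatrix}\right):S=S^{\mathrm{t}}\}$. The spherical map is evaluated by the Iwasawa decomposition along $P$: decomposing $\Gamma_n g\Gamma_n=\bigsqcup_i\Gamma_n g_i$ with each $g_i$ upper block-triangular, one has $\Omega(\Gamma_n g\Gamma_n)=\sum_i\delta_P^{1/2}(m_i)\,\chi_{\bm{x}}(m_i)$, where $m_i\in M$ is the Levi part of $g_i$ and $\chi_{\bm{x}}$ is the torus character recording $x_0$ on the similitude factor $\mu$ and $x_1,\dots,x_n$ on the diagonal of the $\GL_n$-block (this last step being a further, nested Iwasawa inside $\GL_n$). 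Because $g_ig_i^{\mathrm{t}}=p^\alpha J$ forces $v_p(\mu(g_i))=\alpha$, the formal variable $X$ records the similitude exponent $\alpha$.

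Next I would carry out the lattice bookkeeping, which I expect to be the crux. Fixing the Lagrangian $\langle e_1,\dots,e_n\rangle=\Q_p^n$, each coset $\Gamma_n g_i$ is pinned down by the $\GL_n$-lattice $\Lambda\leq\Q_p^n$ spanned by the top-left block of $g_i$ together with the similitude datum, and conversely every vertex of $\mathscr{X}_n$ arises in this way; this is the passage from $\GSp_{2n}$ to $\GL_n$ underlying the statement. The key point is that the upper-triangular normalisation of $g_i$ puts its $\GL_n$-block into \emph{Hermite} normal form \eqref{eqn:HermiteNF}, so that $\chi_{\bm{x}}(m_i)$ reads off precisely the Hermite exponents and contributes the monomial $x_1^{\delta_1(\Lambda_{\mathrm{m}})}\cdots x_n^{\delta_n(\Lambda_{\mathrm{m}})}$. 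The scalar weight is the remaining value of $\delta_P^{1/2}$ and of the nested $\GL_n$-modular shift on $m_i$; computing $\delta_P$ on $\diag(A,\mu A^{-\mathrm{t}})$ from the action $S\mapsto\mu^{-1}ASA^{\mathrm{t}}$ on $\mathrm{Sym}_n$ yields a power of $p$ that is linear in $\alpha$ and in the diagonal exponents. The genuine difficulty is that the same lattice is described through two different normal forms at once: the monomial sees the \emph{Hermite} data $\Hermite{\Lambda_{\mathrm{m}}}$, whereas the modular factor is most naturally expressed through the \emph{elementary divisors} $\Smithpart{\Lambda_{\mathrm{m}}}$. Reconciling these — and matching them against Andrianov's normalisation of the variables $x_i$ — is exactly what produces the exponent $\langle\bm{d},\Smithpart{\Lambda_{\mathrm{m}}}\rangle-\langle\bm{a},\Hermite{\Lambda_{\mathrm{m}}}\rangle$ with $\bm{a}=(1,\dots,n)$ and $\bm{d}=(n,\dots,1)$, and this delicate bookkeeping is where I would expect the main risk of sign and normalisation errors.

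Finally I would account for primitivisation. In Andrianov's normalisation of the variables, the two scaling double cosets $\diag(I_n,pI_n)$ and $\diag(pI_n,I_n)$, both of similitude $p$, have Satake images $x_0$ and $x_0x_1\cdots x_n$ respectively. Multiplication by these two operators realises the two independent scalings present in the $\widetilde{\mathsf{C}}_n$-data but absent from the homothety classes of $\mathscr{X}_n$, so that $\Satakegen_{n,p}$ factors as the primitive series times the geometric series $(1-x_0X)^{-1}(1-x_0x_1\cdots x_nX)^{-1}$. Clearing these denominators as in \eqref{eqn:prim-Satake} removes the two scalings: in each homothety class it selects the unique representative $\Lambda_{\mathrm{m}}$ with $p^{-1}\Lambda_{\mathrm{m}}\not\leq\Z_p^n$, and it converts the $X$-grading from the similitude exponent $\alpha$ into the top elementary divisor $\nu_n(\Lambda_{\mathrm{m}})$, yielding the factor $(x_0X)^{\nu_n(\Lambda_{\mathrm{m}})}$. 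The resulting weighted sum has exactly one term per vertex $[\Lambda]\in V(\mathscr{X}_n)$ and reassembles the $W$-invariant Satake image from this canonical, non-symmetric set of representatives, giving the claimed identity.
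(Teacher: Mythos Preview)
The paper does not prove this statement; it attributes it to Andrianov and refers to \cite[Section~3.3]{Andrianov/87}. Your sketch is a faithful outline of that argument: compute $\Omega$ by Iwasawa decomposition along the Siegel parabolic, identify left cosets with $\GL_n$-lattices via the top-left block in Hermite form, track the modular character to obtain the $p$-power weight, and extract the two geometric-series factors to pass to homothety classes.

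One slip is worth correcting. The matrices $\diag(I_n,pI_n)$ and $\diag(pI_n,I_n)$ are conjugate by $J\in\Gamma_n$, so they represent the \emph{same} double coset, and its Satake image is a $W$-invariant polynomial, not a monomial. What you actually need---and what the argument uses---is that these are two distinct \emph{left} cosets $\Gamma_n g$ inside that double coset, whose individual contributions to $\Omega(T_n^{\mathsf{C}}(p,0))$ in the Iwasawa sum are the monomials $x_0$ and $x_0x_1\cdots x_n$. In the lattice parametrisation these correspond to the two independent moves $(A,\alpha)\mapsto(A,\alpha+1)$ and $(A,\alpha)\mapsto(pA,\alpha+1)$, which multiply the summand by $x_0X$ and $x_0x_1\cdots x_nX$ respectively (the $p$-exponent is unchanged under the second move because $\sum_i a_i=\sum_i d_i=\binom{n+1}{2}$). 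With that correction the primitivisation step goes through as you describe.
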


\subsection{The Hermite--Smith generating function}
\label{sec:Hermite-Smith}

We define a generating function enumerating finite-index sublattices
of $\Zp^n$ simultaneously by their Hermite and Smith normal forms.
For $n\in\N$, let $\bm{X}=(X_1, \dots, X_n)$ and $\bm{Y} = (Y_1,
\dots, Y_n)$ be variables. The \emph{Hermite--Smith generating
function} is
\begin{align}
  \HS_{n,p}(\bm{X}, \bm{Y}) &= \sum_{\Lambda\leq \Z_p^n}
  \bm{X}^{\Smithinc{\Lambda}}\bm{Y}^{\Hermite{\Lambda}} =
  \sum_{\Lambda\leq \Z_p^n}
  \prod_{i=1}^{n}X_i^{\mu_i(\Lambda)}Y_i^{\delta_i(\Lambda)} \in \Z
  \llbracket \bfX,\bfY \rrbracket.
\end{align}
Clearly, if $\Lambda\leq\Z_p^n$ has finite index, then so does
$p^m\Lambda$ for all $m\in \N_0$. This allows us to extract a
``homothety factor'' from the sum defining $\HS_{n,p}(\bm{X},
\bm{Y})$. The \emph{primitive Hermite--Smith generating function} is
\begin{align}
  \HS_{n,p}^{\prim}(\bm{X}, \bm{Y}) &= \sum_{[\Lambda]\in V(\mathscr{X}_n)} \bm{X}^{\Smithinc{\Lambda_{\mathrm{m}}}}\bm{Y}^{\Hermite{\Lambda_{\mathrm{m}}}} = (1 - X_nY_1\cdots Y_n)\HS_{n,p}(\bm{X}, \bm{Y}) . 
\end{align}

With this generating function we may obtain the primitive local Satake
generating function of \Cref{sec:Hecke-series}, as follows. We define
a ring homomorphism
\begin{align}
  \Phi : \Q\llbracket X_1, X_2, \dots, Y_1, Y_2, \dots\rrbracket & \longrightarrow \Q\llbracket x_0, x_1, \dots, X\rrbracket\nonumber \\
  X_i &\longmapsto p^{\binom{i+1}{2}}x_0X,\nonumber \\
  Y_i &\longmapsto p^{-i}x_i \label{eqn:Phi}
\end{align}
for all $i\in \N_0$. By design of $\Phi$ and
\Cref{thm:Andrianov-formula} we have $\Phi(\HS_{n,p}^{\prim}) =
\Satakegen_{n,p}^{\prim}$.
% The factor $\left(1 - p^{\ell-s}\right)$ comes from the factor $(1 - x_0X)$ in
% $\Satakegen_{n,p}$; see \Cref{rem:extra-factor}.

\begin{ex}
  For $n=2$, the Hermite--Smith generating function is
  \begin{align*}
    \HS_{2, p}(\bm{X},\bm{Y}) &= \dfrac{1 - X_1^2Y_1Y_2}{(1 -
      X_1Y_1)(1 - pX_1Y_2)(1 - X_2Y_1Y_2)},\\
%  \end{align*} 
%  Using $\Phi$ defined in \Cref{eqn:Phi}, we therefore have
%  \begin{align*}
    \Satakegen_{2,p}(\bm{x}, X) &= \dfrac{1 - p^{-1}x_0^2x_1x_2X^2}{(1
      - x_0X)(1 - x_0x_1X) (1 - x_0x_2X) (1 - x_0x_1x_2X)} .
  \end{align*}
\end{ex}

\section{Main results}
\label{sec:main-results}

Interpreting the $\ell$-th coefficients of the Ehrhart polynomial of
the polytope $P$ as a function on a set of (homothety classes of)
$p$-adic lattices invites the definition of an action of the spherical
Hecke algebra $\mcH^{\mathsf{C}}_p$. The latter is generated by a set
of $n+1$ generators $T_n^{\mathsf{C}}(p,0)$,
$T_n^{\mathsf{C}}(p^2,1),\dots, T_n^{\mathsf{C}}(p^2,n)$. It suffices
to explain how these generators act. For $k\in [n]$, define diagonal
matrices in $G_n^+$ as follows:
\begin{align*} 
  D_0 &= \mathrm{diag}(\underbrace{1, \dots, 1}_{n}, \underbrace{p, \dots, p}_n), & D_k &= \mathrm{diag}(\underbrace{1, \dots, 1}_{n-k}, \underbrace{p, \dots, p}_k, \underbrace{p^2, \dots, p^2}_{n-k}, \underbrace{p, \dots, p}_k).
\end{align*}
Set $\mathscr{D}_{n,k}^{\mathsf{C}} = \Gamma_n D_k\Gamma_n/\Gamma_n$.
The set $\mathscr{D}_{n,k}^{\mathsf{C}}$ can be interpreted as the set
of symplectic lattices with symplectic elementary divisors equal to
those of $D_k$. We define
\begin{align*} 
  T_n^{\mathsf{C}}(p,0) E(P) &= \sum_{g\in \mathscr{D}_{n,0}^{\mathsf{C}}} E(g\cdot P), & T_n^{\mathsf{C}}(p^2, k) E(P) &= \sum_{g\in \mathscr{D}_{n,k}^{\mathsf{C}}} E(g\cdot P). 
\end{align*}
For $\ell\geq \N_0$, we define functions
$$
  \mathscr{E}_{n,p,\ell,P} : G_n^+/\Gamma_n \to \C, \quad \Gamma_ng \mapsto c_{\ell}(E^{\Lambda_{g^{-1}}}(P)).
$$ 
Lastly, for all $T\in\mcH^{\mathsf{C}}_p$ set 
\[ 
  T\mathscr{E}_{n,p,\ell,P}(\Gamma_ng) = c_{\ell}(TE^{\Lambda_{g^{-1}}}(P)).
\]  

Recall that $P$ is full-dimensional; for $k\in [n]$, and $\ell\in
[2n]_0$, we define
\begin{align*} 
\nu^{\mathsf{C}}_{n, 0, \ell}(p) &= \dfrac{c_{\ell}(T_n^{\mathsf{C}}(p, 0)E(P))}{c_{\ell}(E(P))}, &
  \nu^{\mathsf{C}}_{n, k, \ell}(p) &= \dfrac{c_{\ell}(T_n^{\mathsf{C}}(p^2, k)E(P))}{c_{\ell}(E(P))}.
\end{align*} 
The notation suggests that the value $\nu_{n,k,\ell}^{\mathsf{C}}(p)$
is independent of the polytope $P$, which is justified by
\Cref{thmmain:Ehrhart-Satake}. General properties of the Ehrhart
polynomial imply that
\begin{align*} 
  \nu^{\mathsf{C}}_{n, n, \ell}(p) &= p^{\ell}, & \nu^{\mathsf{C}}_{n,k,0}(p) &= \#\mathscr{D}_{n,k}^{\mathsf{C}}.
\end{align*}

Every $\Q$-linear homomorphism $\lambda : \mcH^{\mathsf{C}}_p \to \C$ is
uniquely determined by parameters $(a_0,\dots, a_n)\in \C^{n+1}$ such that if
$\psi : \C[x_0^{\pm 1},\dots, x_n^{\pm 1}]\to \C$ is given by $x_i=a_i$ then
$\lambda = \psi\circ\Omega$; see \cite[Proposition 3.3.36]{Andrianov/87}.

\begin{thmabc}\label{thmmain:Ehrhart-Satake}
  The functions $\mathscr{E}_{n,p,\ell, P}$ are Hecke eigenfunctions
  under the action defined above; specifically, for all $k\in [n]$, we
  have
  \begin{align*}
    T_n^{\mathsf{C}}(p, 0)\mathscr{E}_{n,p,\ell, P} &= \nu_{n,0,\ell}^{\mathsf{C}}(p)\mathscr{E}_{n,p,\ell, P}, & T_n^{\mathsf{C}}(p^2, k)\mathscr{E}_{n,p,\ell, P} &= \nu_{n,k,\ell}^{\mathsf{C}}(p)\mathscr{E}_{n,p,\ell, P},
  \end{align*}
  where the $\nu_{n,k,\ell}^{\mathsf{C}}(p)$ are polynomials in $p$
  with integer coefficients which are independent of~$P$. Moreover,
  the parameters associated to $\nu_{n,k,\ell}^{\mathsf{C}}(p)$ are
  $(p^{\ell}, p, p^2, \dots, p^{n-1}, p^{n-\ell})$.
\end{thmabc}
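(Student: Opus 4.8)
The plan is to first turn the eigenfunction identity into a polytope statement at the base point. By \Cref{eqn:Ehrhart-id} we have $\mathscr{E}_{n,p,\ell,P}(\Gamma_n g)=c_\ell(E(g\cdot P))$, and unwinding the definition of the Hecke action (which acts by replacing $P$ with $g\cdot P$) gives, for the generator $T_n^{\mathsf{C}}(p^2,k)$,
\[ T_n^{\mathsf{C}}(p^2,k)\mathscr{E}_{n,p,\ell,P}(\Gamma_n g)=\sum_{h\in\mathscr{D}_{n,k}^{\mathsf{C}}} c_\ell\bigl(E((hg)\cdot P)\bigr), \]
and identically for $T_n^{\mathsf{C}}(p,0)$ with $\mathscr{D}_{n,0}^{\mathsf{C}}$. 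Writing $Q=g\cdot P$ (a full-dimensional lattice polytope, as $g\in\Mat_{2n}(\Z)$ is invertible) and recalling that the claimed eigenvalue must be polytope-independent, it suffices to prove, for every full-dimensional lattice polytope $Q\subset\R^{2n}$ and all $k\in\{0,1,\dots,n\}$,
\[ V_{k,\ell}(Q):=\sum_{h\in\mathscr{D}_{n,k}^{\mathsf{C}}} c_\ell\bigl(E(h\cdot Q)\bigr)=\nu_{n,k,\ell}^{\mathsf{C}}(p)\,c_\ell(E(Q)),\qquad(\star) \]
with $\nu_{n,k,\ell}^{\mathsf{C}}(p)$ independent of $Q$. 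The two stated special values are then consistency checks: $D_n=pI_{2n}$ acts by the dilation $Q\mapsto pQ$, under which $c_\ell$ scales by $p^\ell$, giving $\nu_{n,n,\ell}^{\mathsf{C}}=p^\ell$; and $c_0\equiv 1$ gives $\nu_{n,k,0}^{\mathsf{C}}=\#\mathscr{D}_{n,k}^{\mathsf{C}}$.

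The conceptual core is the proportionality $(\star)$, i.e.\ the independence of the ratio $V_{k,\ell}(Q)/c_\ell(E(Q))$ from $Q$. I would phrase this through valuation theory. Each $Q\mapsto c_\ell(E(h\cdot Q))$ is a $\Z^{2n}$-translation-invariant valuation; since $c_\ell$ is homogeneous of degree $\ell$ under dilation and $h\cdot(tQ)=t(h\cdot Q)$, the sum $V_{k,\ell}$ is homogeneous of degree $\ell$; and because the multiset of refinements $\{\Lambda_{h^{-1}}\}$ is stable under $\Gamma_n=\GSp_{2n}(\Z_p)$, the valuation $V_{k,\ell}$ is invariant under $\GSp_{2n}(\Z)$. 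By the Betke--Kneser theorem the space of $\GL_{2n}(\Z)$-invariant, translation-invariant valuations homogeneous of degree $\ell$ is the line $\C\,c_\ell$, so $(\star)$ amounts to showing that the symplectic average $V_{k,\ell}$ already lies in this line rather than merely in the a priori larger space of $\GSp_{2n}(\Z)$-invariants. I expect this collapse to be the main obstacle: the individual terms $c_\ell(E(h\cdot Q))$ are genuinely $Q$-dependent---already for $n=1$ the $p+1$ index-$p$ refinements of $\Z^2$ contribute unequal $\ell$th coefficients---so the $Q$-dependence cancels only after summing over the whole double coset.

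To effect the collapse and simultaneously extract the scalar, I would compute $V_{k,\ell}(Q)$ explicitly from the Hermite normal form parametrization of $\mathscr{D}_{n,k}^{\mathsf{C}}$, grading the sum by Hermite and Smith data as in $\HS_{n,p}(\bm X,\bm Y)$. Expanding $c_\ell^{\Lambda_{h^{-1}}}(Q)$ along the flag attached to the Hermite normal form expresses the $Q$-dependence through Ehrhart data of faces and sections of $Q$, which I expect to reorganize so that all contributions except an overall $c_\ell(E(Q))$ cancel, leaving a factor that is a polynomial in $p$ with integer coefficients. The Satake machinery then identifies this factor: via the homomorphism $\Phi$ with $\Phi(\HS_{n,p}^{\prim})=\Satakegen_{n,p}^{\prim}$ and \Cref{thm:Andrianov-formula}, the generating function of the $\nu_{n,k,\ell}^{\mathsf{C}}(p)$ matches $\Satakegen_{n,p}$ specialized at Satake parameters $\bm a=(a_0,\dots,a_n)$, and degree-$\ell$ homogeneity forces the $\ell$-dependence into the two similitude directions as $a_0=p^\ell$, $a_n=p^{n-\ell}$ while the type-$\mathsf{A}$ Levi directions keep the trivial values $a_i=p^i$; note that $x_0x_1\cdots x_n$ then evaluates to $p^{\binom{n+1}{2}}$ independently of $\ell$. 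This is cross-checked against the $\ell=0$ specialization $\bm a=(1,p,\dots,p^n)$ (which recovers the coset counts $\#\mathscr{D}_{n,k}^{\mathsf{C}}$), against $\nu_{n,n,\ell}^{\mathsf{C}}=p^\ell$, and for $n\leq 2$ against the explicit $\Satakegen$ in the Example. Finally $\nu_{n,k,\ell}^{\mathsf{C}}(p)=\psi_{\bm a}(\Omega(T))$ via the cited parametrization of homomorphisms $\mathcal{H}_p^{\mathsf{C}}\to\C$, and its polynomiality and integrality follow from the explicit form of $\Omega$ on the generators.
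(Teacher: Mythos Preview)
The paper is an FPSAC extended abstract and contains no proof of \Cref{thmmain:Ehrhart-Satake}; the only argument it spells out is the type-$\mathsf{A}$ analogue \Cref{thm:A}, which proceeds by quoting the explicit eigenvalue computation \cite[Lem.~3.3]{GRV/07} (a count over Grassmannians in $\F_p^n$) and then matching it against the Satake image via elementary symmetric polynomials. So there is no paper proof to compare against directly, but that template is the relevant benchmark.

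Your reduction to the base-point identity $(\star)$ is fine once you choose integral coset representatives so that $Q=g\cdot P$ is an honest lattice polytope (as written, $g\in\Mat_{2n}(\Z_p)$, not $\Mat_{2n}(\Z)$), and casting $V_{k,\ell}$ as a translation-invariant, $\ell$-homogeneous valuation is natural. The gap is the one you name but do not close: Betke--Kneser yields a one-dimensional space only under full $\GL_{2n}(\Z)$-invariance, while the multiset $\{\Lambda_{h^{-1}}:h\in\mathscr{D}_{n,k}^{\mathsf{C}}\}$ is stable only under the symplectic group, so $V_{k,\ell}$ is a priori merely $\Sp_{2n}(\Z)$-invariant and there is no off-the-shelf classification of such valuations. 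Your fallback---``compute $V_{k,\ell}$ explicitly from Hermite data \dots\ I expect to reorganize so that all contributions except an overall $c_\ell(E(Q))$ cancel''---is the theorem restated, not proved; this cancellation is exactly what needs a mechanism, and the proposal supplies none. A second soft spot: deducing the Satake parameters from ``degree-$\ell$ homogeneity'' is not an argument---homogeneity constrains how the eigenvalue scales under $Q\mapsto pQ$, but does not by itself decide which of the $n+1$ coordinates absorbs the~$\ell$. In the type-$\mathsf{A}$ proof the parameters emerge only after the eigenvalues are computed on all generators and matched to $\omega(T_n^{\mathsf{A}}(p,k))$; the type-$\mathsf{C}$ statement presumably requires the analogous direct computation, now with isotropic and Lagrangian flag counts in $\F_p^{2n}$ in place of ordinary Grassmannians.
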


\Cref{tab:nu-values} lists the values of $\nu_{n,k,\ell}^{\mathsf{C}}(p)$ for
small values of $n$ and $k$. 

\begin{table}[h]
  $$
  \begin{array}{l|l|l} \ell & \nu^{\mathsf{C}}_{2,0,\ell}(p) & \nu^{\mathsf{C}}_{2,1,\ell}(p)\\
    \hline
      4 & p^5 + p^4 + p^3 + p^2& p^8 + p^7 + p^6 + p^5 \\
      3 & p^4 + p^3 + p^3 + p^2& 2p^6 + p^5 + 2p^4 -p^3 \\
      2 & p^3 + p^3 + p^2 + p^2& p^5 + 3p^4+p^3-p^2 \\
      1 & p^3 + p^2 + p^2 + p^1& 2p^4 + p^3 + 2p^2 -p \\
      0 & p^3 + p^2 + p+ 1& p^4 + p^3 + p^2 + p \\
  \end{array}
  $$
  \caption{The polynomials $\nu_{2,k,\ell}^{\mathsf{C}}(p)$ for $k\in
  \{0,1\}$ and $\ell\in [4]_0$.}
  \label{tab:nu-values}
\end{table}

\Cref{thmmain:Ehrhart-Satake} enables us to relate
$\mathcal{Z}_{n,\ell,p}^{\mathsf{C}}(s)$ to $\Satakegen_{n,p}(\bm{x}, X)$. Let
$\psi_{n,\ell}$ be the ring homomorphism from $\C\llbracket x_0, x_1, \dots,
X\rrbracket \to \C[t]$ given by 
\begin{align*}
  X &\mapsto t^n & x_0 &\mapsto p^{\ell}, & x_n &\mapsto p^{n-\ell}, & x_i \mapsto p^i.
\end{align*}

\begin{corabc}\label{cormain:Z-G}
  For $n\in \N$ and $\ell\in [2n]_0$ we have, writing $t = p^{-s}$,
  \begin{align*} 
    (\psi_{n,\ell}\circ \Phi)(\HS_{n,p}^{\prim}(\bm{X}, \bm{Y})) =
    \psi_{n,\ell}(\Satakegen_{n,p}^{\prim}) =
        {\mathcal{Z}_{n,\ell,p}^{\mathsf{C}}(s)}{\left(1 -
          p^{\ell-s}\right)\left(1 - p^{\binom{n+1}{2}-s}\right)}.
  \end{align*}
\end{corabc}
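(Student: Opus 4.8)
The first equality is formal: applying the ring homomorphism $\psi_{n,\ell}$ to the identity $\Phi(\HS_{n,p}^{\prim}) = \Satakegen_{n,p}^{\prim}$ recorded after \eqref{eqn:Phi} gives $(\psi_{n,\ell}\circ\Phi)(\HS_{n,p}^{\prim}) = \psi_{n,\ell}(\Satakegen_{n,p}^{\prim})$ at once, so all the content lies in the second equality. Here I would factor out the two distinguished linear terms: by \eqref{eqn:prim-Satake} and multiplicativity of $\psi_{n,\ell}$,
\begin{align*}
  \psi_{n,\ell}(\Satakegen_{n,p}^{\prim}) = \psi_{n,\ell}(1-x_0X)\,\psi_{n,\ell}(1-x_0x_1\cdots x_nX)\,\psi_{n,\ell}(\Satakegen_{n,p}),
\end{align*}
so it suffices to (i) evaluate the two linear factors and (ii) prove $\psi_{n,\ell}(\Satakegen_{n,p}) = \mathcal{Z}_{n,\ell,p}^{\mathsf{C}}(s)$.

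For (i) I would substitute directly. One has $x_0\mapsto p^{\ell}$ and, since $x_1\cdots x_n\mapsto p^{1+2+\cdots+(n-1)+(n-\ell)} = p^{\binom{n+1}{2}-\ell}$, also $x_0x_1\cdots x_n\mapsto p^{\binom{n+1}{2}}$. Hence the two factors become $1-p^{\ell}\psi_{n,\ell}(X)$ and $1-p^{\binom{n+1}{2}}\psi_{n,\ell}(X)$, which are precisely the two linear factors multiplying $\mathcal{Z}_{n,\ell,p}^{\mathsf{C}}(s)$ on the right once the prescribed image of $X$ and $t=p^{-s}$ are inserted. This is routine bookkeeping.

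The heart is (ii). First I would stratify $G_n^+/\Gamma_n$ by the $p$-valuation of the similitude character: because $\Gamma_n$ contains a representative of every unit similitude (e.g.\ $\diag(uI_n,I_n)$ has $\mu=u\in\Z_p^\times$), each coset has a representative with $\mu(g)=p^{\alpha}$ for a unique $\alpha\ge 0$, giving $G_n^+/\Gamma_n = \bigsqcup_{\alpha\ge 0} D_n^{\mathsf{C}}(p^{\alpha})/\Gamma_n$. The lattice $\Lambda_{g^{-1}}$ depends only on the coset, and on the $\alpha$-th stratum $\det g = \mu(g)^n = p^{n\alpha}$ forces $|\Lambda_{g^{-1}}:\Z_p^{2n}| = p^{n\alpha}$, so that the weight equals $\psi_{n,\ell}(X^{\alpha})$. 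Using $E(g\cdot P)=E^{\Lambda_{g^{-1}}}(P)$ from \eqref{eqn:Ehrhart-id} together with the geometric definition of the Hecke action and linearity of coefficient extraction, the contribution of this stratum is
\begin{align*}
  \sum_{g\in D_n^{\mathsf{C}}(p^{\alpha})/\Gamma_n} c_{\ell}^{\Lambda_{g^{-1}}}(P) = c_{\ell}\bigl(T_n^{\mathsf{C}}(p^{\alpha})E(P)\bigr).
\end{align*}
By \Cref{thmmain:Ehrhart-Satake} the function $\mathscr{E}_{n,p,\ell,P}$ is a simultaneous eigenfunction of the generators of the commutative algebra $\mcH_p^{\mathsf{C}}$, so the assignment of eigenvalues is the character $\psi_{\bm a}\circ\Omega$ attached to the parameters $\bm{a}=(p^{\ell},p,\dots,p^{n-1},p^{n-\ell})$ (cf.\ \cite[Proposition 3.3.36]{Andrianov/87}); thus the stratum contributes $\Omega(T_n^{\mathsf{C}}(p^{\alpha}))\big|_{\bm x=\bm a}\,c_{\ell}(E(P))$.

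Dividing by $c_{\ell}(P)=c_{\ell}(E(P))$ (nonzero, as required for the ratios defining $\mathcal{Z}_{n,\ell,p}^{\mathsf{C}}$ to be meaningful) and assembling the strata gives
\begin{align*}
  \mathcal{Z}_{n,\ell,p}^{\mathsf{C}}(s) = \sum_{\alpha\ge 0}\Omega(T_n^{\mathsf{C}}(p^{\alpha}))\big|_{\bm x=\bm a}(t^n)^{\alpha} = \Satakegen_{n,p}(\bm a,t^n) = \psi_{n,\ell}(\Satakegen_{n,p}),
\end{align*}
the middle step being the definition $\Satakegen_{n,p}(\bm x,X)=\sum_{\alpha}\Omega(T_n^{\mathsf{C}}(p^{\alpha}))X^{\alpha}$; combined with (i) this proves the claim. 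I expect the obstacle to be the two coupled points inside (ii): propagating the generator-level eigenvalue statement of \Cref{thmmain:Ehrhart-Satake} to every $T_n^{\mathsf{C}}(p^{\alpha})$ through the character property of Satake eigenvalues, and matching the normalisation of the index weight $|\Lambda_{g^{-1}}:\Z_p^{2n}|^{-s}$ to the grading variable $X$ so that the substitution for $X$ in $\psi_{n,\ell}$ is exactly this weight. Once these are pinned down, the remainder is bookkeeping.
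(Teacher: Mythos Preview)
Your argument is correct and is precisely the derivation the paper has in mind: the corollary is stated without proof as an immediate consequence of \Cref{thmmain:Ehrhart-Satake}, and you have supplied that consequence---the first equality from $\Phi(\HS^{\prim}_{n,p})=\Satakegen^{\prim}_{n,p}$, the two linear factors from \eqref{eqn:prim-Satake} under $\psi_{n,\ell}$, and the identification $\psi_{n,\ell}(\Satakegen_{n,p})=\mathcal{Z}_{n,\ell,p}^{\mathsf{C}}(s)$ by stratifying $G_n^+/\Gamma_n$ over~$\alpha$ and invoking the character attached to the Satake parameters of \Cref{thmmain:Ehrhart-Satake}. The two points you flag as potential obstacles (propagating the eigenvalue property from the generators to all $T_n^{\mathsf{C}}(p^\alpha)$ via commutativity of $\mcH_p^{\mathsf{C}}$, and matching $|\Lambda_{g^{-1}}:\Z_p^{2n}|^{-s}=p^{-n\alpha s}$ with $\psi_{n,\ell}(X^{\alpha})=t^{n\alpha}$) are exactly the hinges of the argument, and you handle both correctly.
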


Thanks to \Cref{cormain:Z-G}, we can work with $\HS_{n,p}$ to prove
that $\Satakegen_{n,p}$ and $\mathcal{Z}_{n,\ell,p}^{\mathsf{C}}$
satisfy a self-reciprocity property, which proves the conjecture in
\cite[Remark~4]{Vankov/11}.

\begin{thmabc}\label{thmmain:reciprocity}
  Let $n\in\N$. Then $\HS_{n,p}(\bm{X}, \bm{Y})$ is a rational
  function in $\bm{X}$ and $\bm{Y}$. Furthermore, for
  $\bm{X}^{-1}=(X_1^{-1},\dots, X_n^{-1})$ and $\bm{Y}^{-1} =
  (Y_1^{-1},\dots, Y_n^{-1})$, we have
  \begin{align*}
    \left.\mathrm{HS}_{n,p}(\bm{X}^{-1},\bm{Y}^{-1})\right|_{p\to p^{-1}} &= (-1)^np^{\binom{n}{2}}X_nY_1\cdots Y_n\cdot \mathrm{HS}_{n,p}(\bm{X},\bm{Y}).
  \end{align*}
\end{thmabc}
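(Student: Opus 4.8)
The plan is to establish both claims — rationality and the functional equation — directly for the rational function $\HS_{n,p}$, rather than via a bijection on sublattices: since $\Smithinc{\Lambda}$ and $\Hermite{\Lambda}$ take values in $\N_0^n$, no termwise involution can realise the reflection $(\bm X,\bm Y,p)\mapsto(\bm X^{-1},\bm Y^{-1},p^{-1})$, so the identity can only hold after analytic continuation, at the level of rational functions. As a first reduction I would peel off the homothety factor. Since $\HS_{n,p}^{\prim}=(1-X_nY_1\cdots Y_n)\HS_{n,p}$ and $1-X_n^{-1}Y_1^{-1}\cdots Y_n^{-1}=-X_n^{-1}Y_1^{-1}\cdots Y_n^{-1}(1-X_nY_1\cdots Y_n)$, the assertion of \Cref{thmmain:reciprocity} is equivalent to the cleaner functional equation $\HS_{n,p}^{\prim}(\bm X^{-1},\bm Y^{-1})|_{p\to p^{-1}}=(-1)^{n+1}p^{\binom n2}\HS_{n,p}^{\prim}(\bm X,\bm Y)$ for the primitive series, which is indexed by the building vertices $V(\mathscr X_n)$. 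I would prove this primitive form, treating $p$ throughout as an indeterminate.

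For rationality and an explicit shape I would parametrise finite-index sublattices by their Hermite normal forms \eqref{eqn:HermiteNF} and realise $\HS_{n,p}$ as a $p$-adic cone integral in the style of Denef. The Hermite datum $\Hermite{\Lambda}=\bm\delta$ is read off the diagonal, while the Smith datum $\Smithpart{\Lambda}$, hence $\Smithinc{\Lambda}=\bm\mu$, is recovered from the $p$-adic valuations of the minors of $M_\Lambda$ by greatest-common-divisor (i.e.\ minimum) formulas. These are piecewise linear in the valuation data, so after triangulating the relevant region into finitely many pointed rational polyhedral cones on which every minimum is attained by a fixed minor, both $\bm\mu$ and $\bm\delta$ become linear. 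On each cone $C$ the contribution is a geometric sum $\tfrac{\text{(monomial)}}{\prod_j(1-\mathfrak m_j)}$ with $\mathfrak m_j$ a monomial in $p,X_i,Y_i$; summing the finitely many cones yields a Denef-type formula and, in particular, shows $\HS_{n,p}$ is rational.

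Given such a decomposition, I would derive the functional equation from reciprocity. For a pointed rational cone $C$ of dimension $d$ the lattice-point transform satisfies Stanley reciprocity, $\sigma_C(z^{-1})=(-1)^d\sigma_{C^\circ}(z)$, and the substitution $p\to p^{-1}$ is precisely the inversion of the $p$-component of $z$. Applying this cone by cone and using that the fan underlying the Denef formula is invariant under the reflection induced by the longest element of the Weyl group of type $\mathsf A_{n-1}$ (the symmetry already organising $\mathscr X_n$), the reciprocal interior contributions reassemble into $\HS_{n,p}^{\prim}$; the net dimension count produces the sign $(-1)^{n+1}$, while the shift between a cone and its reciprocal produces the power $p^{\binom n2}$ (and, before peeling, the monomial $X_nY_1\cdots Y_n$). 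A consistency check is available through \Cref{cormain:Z-G} and \Cref{thm:Andrianov-formula}: applying $\Phi$ must carry the identity to the corresponding symmetry of $\Satakegen_{n,p}^{\prim}$, although that collapsed statement is strictly weaker than the identity in the finest variables.

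The hard part will be the last step: proving that the triangulation can be chosen \emph{symmetric} under the reciprocity reflection and that the boundary terms shared by adjacent cones cancel so as to leave exactly the prefactor $(-1)^np^{\binom n2}X_nY_1\cdots Y_n$, with no residual lower-order terms. The genuine subtlety is that the passage from Hermite to Smith data depends on the actual $p$-adic entries of $M_\Lambda$ and not merely on their valuations — cancellations among the minors can occur — so the cones must be cut compatibly with this arithmetic and with the $\mathsf A_{n-1}$ symmetry simultaneously. Equivalently, the engine is the clean $p\to p^{-1}$ behaviour of the inner ``Hermite distribution over a fixed cotype'' $\sum_{\Lambda:\,\Smithpart{\Lambda}=\bm\nu}\bm Y^{\Hermite{\Lambda}}$, a Hall--Littlewood/Kostka--Foulkes-type polynomial, and its compatibility with the summation over the dominant cone of cotypes $\bm\nu$. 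I would first carry out the entire bookkeeping for $n=2$, matching the displayed example $\HS_{2,p}$, and then set up the general case along this $\mathsf A_{n-1}$-symmetry.
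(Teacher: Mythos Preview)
Your setup coincides with the paper's: express $\HS_{n,p}$ as a $p$-adic integral over Hermite normal forms and deduce rationality and the functional equation from general properties of such integrals. The paper, however, does not attempt the reciprocity step by hand; it simply invokes the functional-equation machinery of \cite{Voll/10} as a black box, where the meaning of $p\to p^{-1}$ and the resulting sign/monomial prefactor are worked out once and for all for a broad class of $p$-adic integrals of this shape.

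The place where your direct route runs into a genuine gap is precisely the one you flag. Your claim that ``after triangulating the relevant region into finitely many pointed rational polyhedral cones on which every minimum is attained by a fixed minor, both $\bm\mu$ and $\bm\delta$ become linear'' is false for $n\geq 3$: the $k\times k$ minors of $M_\Lambda$ are genuine polynomials in the off-diagonal entries, e.g.\ $m_{12}m_{23}-m_{13}p^{\delta_2}$, and their $p$-adic valuations are \emph{not} determined by the valuations of the entries. Thus the map $(\text{Hermite data})\mapsto\bm\mu$ is not piecewise linear on any finite fan in valuation space, and no amount of ``cutting compatibly with this arithmetic'' using rational polyhedral cones in the valuations alone will make Stanley reciprocity apply cone by cone. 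This is not a boundary-bookkeeping issue; it is a structural obstruction to the decomposition you propose.

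What \cite{Voll/10} buys you is exactly a way around this: one keeps the honest $p$-adic integral (over the entries $m_{ij}\in\Z_p$, not their valuations), where the integrand involves $|\gcd(\text{minors})|_p$, and proves the $p\to p^{-1}$ inversion at that level, via a Bruhat/flag decomposition and an inversion lemma for the resulting combinatorial sums. The cancellation loci then have the right measure automatically and never need to be resolved by a fan. Your reduction to $\HS_{n,p}^{\prim}$ and the predicted prefactor are correct and match the paper; to close the argument, replace the Stanley-reciprocity step by an appeal to \cite{Voll/10}.
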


We prove \Cref{thmmain:reciprocity} by writing $\HS_{n,p}$ as a
$p$-adic integral and applying results of \cite{Voll/10}, where the
operation of inverting $p$ is also explained.

\begin{corabc}
  For $n\in\N$ and $\ell\in[2n]_0$, we have
  \begin{alignat*}{2}
    \left. \mathcal{Z}^{\mathsf{C}}_{n,\ell,p}(s)\right|_{p \rightarrow p^{-1}} &=& (-1)^{n+1} p^{n^2+\ell -2ns} \cdot& \mathcal{Z}^{\mathsf{C}}_{n,\ell,p}(s),\\
    \left. R_{n,p}(\bfx, X)\right|_{p \rightarrow p^{-1}} &=& (-1)^{n+1} p^{\binom{n}{2}}x_0^2 x_1\dots x_n X^2 \cdot& R_{n,p}(\bfx, X).
    \end{alignat*}
\end{corabc}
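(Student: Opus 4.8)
The plan is to push the reciprocity of \Cref{thmmain:reciprocity} forward through the two transfer maps $\Phi$ and $\psi_{n,\ell}$. Throughout I treat $p$ as a formal variable and write $\dagger$ for the involution that inverts $p$ together with every remaining variable of the ring in question (so $\bm X,\bm Y$ on the Hermite--Smith side, $\bm x,X$ on the Satake side, and $t$ after specialisation). First I would primitivise. Since $\Satakegen$-bookkeeping aside, $\HS_{n,p}^{\prim}=(1-W)\HS_{n,p}$ with $W=X_nY_1\cdots Y_n$, and $\dagger$ sends $W$ to $W^{-1}$, \Cref{thmmain:reciprocity} together with the identity $(1-W^{-1})W=-(1-W)$ gives
\[
  \HS_{n,p}^{\prim}(\bm X^{-1},\bm Y^{-1})\big|_{p\to p^{-1}}=(-1)^{n+1}p^{\binom n2}\,\HS_{n,p}^{\prim}(\bm X,\bm Y).
\]

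Next I would transfer this to the Satake side via $\Phi$. Although $\Phi$ does not commute with $p\mapsto p^{-1}$ (it contains explicit powers of $p$), a check on the generators shows that it intertwines the two copies of $\dagger$, i.e.\ $\Phi\circ\dagger=\dagger\circ\Phi$: both composites send $X_i\mapsto p^{-\binom{i+1}2}x_0^{-1}X^{-1}$, $Y_i\mapsto p^{i}x_i^{-1}$, and $p\mapsto p^{-1}$. Applying $\Phi$ to the displayed identity and using $\Phi(\HS_{n,p}^{\prim})=\Satakegen_{n,p}^{\prim}$ yields
\[
  \Satakegen_{n,p}^{\prim}(\bm x^{-1},X^{-1})\big|_{p\to p^{-1}}=(-1)^{n+1}p^{\binom n2}\,\Satakegen_{n,p}^{\prim}(\bm x,X).
\]
To reach the second displayed functional equation I would then de-primitivise: in $\Satakegen_{n,p}^{\prim}=(1-x_0X)(1-x_0x_1\cdots x_nX)R_{n,p}$ the involution $\dagger$ turns each factor $1-m$ into $1-m^{-1}=-m^{-1}(1-m)$, so the two primitivising factors together contribute the monomial $\bigl((x_0X)(x_0x_1\cdots x_nX)\bigr)^{-1}=(x_0^2x_1\cdots x_nX^2)^{-1}$. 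Cancelling them leaves exactly the asserted prefactor $(-1)^{n+1}p^{\binom n2}x_0^2x_1\cdots x_nX^2$ for $R_{n,p}$, the inversion being understood as inverting $p$ together with $\bm x$ and $X$, in line with \Cref{thmmain:reciprocity}.

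Finally I would specialise through $\psi_{n,\ell}$. As this map sends each variable to a monomial in $p$ and $t$, the same generator check gives $\psi_{n,\ell}\circ\dagger=\dagger\circ\psi_{n,\ell}$, where on the target $\dagger$ inverts $p$ and $t$; this is precisely $p\to p^{-1}$ performed at fixed $s$, since then $t=p^{-s}\mapsto t^{-1}$. Applying $\psi_{n,\ell}$ to the reciprocity for $\Satakegen_{n,p}^{\prim}$ and invoking \Cref{cormain:Z-G} in the form $\psi_{n,\ell}(\Satakegen_{n,p}^{\prim})=\mathcal Z_{n,\ell,p}^{\mathsf C}(s)(1-p^{\ell-s})(1-p^{\binom{n+1}2-s})$ transports the identity to $\mathcal Z_{n,\ell,p}^{\mathsf C}$ up to the two explicit factors. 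Each factor has the shape $1-p^{a}t$ and is sent by $\dagger$ to $-p^{-a}t^{-1}(1-p^{a}t)$; dividing these out contributes a net $t^{2}$ and combines the $p$-powers via $\binom n2+\binom{n+1}2=n^{2}$ (the remaining $p^{\ell}$ coming from the first factor), producing the prefactor $(-1)^{n+1}p^{n^2+\ell}t^{2}$ and hence the first functional equation for $\mathcal Z_{n,\ell,p}^{\mathsf C}$.

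The main obstacle is exactly the non-commutativity of $\Phi$ and $\psi_{n,\ell}$ with $p\mapsto p^{-1}$: the argument hinges entirely on the two intertwining relations $\Phi\circ\dagger=\dagger\circ\Phi$ and $\psi_{n,\ell}\circ\dagger=\dagger\circ\psi_{n,\ell}$, which must be verified on generators with the variable inversions $\bm x\mapsto\bm x^{-1}$, $X\mapsto X^{-1}$ (automatic for $\mathcal Z$ through $t\mapsto t^{-1}$) correctly in place, and on keeping exact track of the monomial prefactors generated by primitivisation and by the two explicit factors of \Cref{cormain:Z-G}. Once everything is organised around the single involution $\dagger$, the only genuine arithmetic is the exponent identity $\binom n2+\binom{n+1}2=n^2$.
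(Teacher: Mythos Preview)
Your argument is correct and is exactly the derivation the paper has in mind: the corollary is stated without proof because it is meant to follow from \Cref{thmmain:reciprocity} by pushing the reciprocity through $\Phi$ and then $\psi_{n,\ell}$ via \Cref{cormain:Z-G}, precisely as you do (primitivise, transfer, de-primitivise, specialise). One small bookkeeping point: with the paper's convention $X\mapsto t^n$ the two explicit factors from \Cref{cormain:Z-G} are of the form $1-p^{a}t^{n}$, so dividing them out contributes $t^{2n}=p^{-2ns}$ rather than $t^{2}$; this is what produces the exponent $n^{2}+\ell-2ns$ in the stated functional equation.
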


In the next theorem, we determine a formula for the specialization of
$\HS_{n,p}^{\prim}$ which yields $\mathcal{Z}_{n,\ell,p}^{\mathsf{C}}$
by \Cref{cormain:Z-G}. To this end we define
$$\HSsp_{n,p}(\bm{X}, Y) = \HS_{n,p}^{\prim}(\bm{X}, 1, \dots, 1,
Y).$$ We prove that $\HSsp_{n,p}$ is a rational function in the $n+1$
variables $\bm{X}$ and $Y$ and, in addition, the prime~$p$. In order
to describe the formula, we define additional notation. For $I = \{i_1
< \cdots < i_{\ell}\} \subseteq [n-1]$, with $i_{\ell+1}=n$, $k\in
[\ell+1]$, and a variable $Z$, we set
\begin{align*} 
  I^{(k)} &= \{i_j ~|~ j < k\} \cup \{i_j-1 ~|~ j\geq k\}\\ 
  \mathscr{G}_{n,I,k}(Z,\bm{X},Y) &= \left(\prod_{j=1}^{k-1}\dfrac{Z^{i_j(n - i_j - 1)}X_{i_j}}{1 - Z^{i_j(n - i_j - 1)}X_{i_j}}\right) \left(\prod_{j=k}^{\ell} \dfrac{Z^{i_j(n - i_j)}X_{i_j}Y}{1 - Z^{i_j(n - i_j)}X_{i_j}Y}\right). 
\end{align*}

\begin{thmabc}\label{thm:HS.explicit}
  Let $n\in\N$. For $I=\{i_1 < \cdots < i_{\ell}\}_< \subseteq [n-1]$, set 
  \begin{align*} 
    W_{n,I} (Z, \bm{X}, Y) &= \sum_{k=1}^{\ell+1} Z^{-(n-i_{k})}\binom{n-1}{I^{(k)}}_{Z^{-1}}\mathscr{G}_{n,I,k}(Z, \bm{X}, Y) \\
    &\quad + \sum_{k=1}^{\ell} \dfrac{(1 - Z^{-i_j})\mathscr{G}_{n,I,k}(Z, \bm{X}, Y)}{1 - Z^{i_j(n - i_j - 1)}X_{i_j}}\left(\sum_{m=k+1}^{\ell+1}Z^{-(n-i_m)}\right)\binom{n-1}{I^{(k+1)}}_{Z^{-1}} .
  \end{align*}
  Then 
  $$
    \HSsp_{n,p}(\bm{X}, Y) = \sum_{I\subseteq [n-1]} W_{n, I}(p, \bm{X}, Y) \in \Z(p,\bm{X}, Y).
  $$
\end{thmabc}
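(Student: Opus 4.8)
The plan is to read off the coefficients of $\HSsp_{n,p}$ as weighted counts of finite abelian $p$-groups and to reduce everything to two finite enumeration problems over $\F_p$. Setting $Y_1=\cdots=Y_{n-1}=1$ and $Y_n=Y$ in $\HS_{n,p}^{\prim}$ leaves
\[
  \HSsp_{n,p}(\bm{X},Y)=\sum_{[\Lambda]\in V(\mathscr{X}_n)}\bm{X}^{\Smithinc{\Lambda_{\mathrm{m}}}}\,Y^{\delta_n(\Lambda_{\mathrm{m}})},
\]
so only the Smith data and the single Hermite invariant $\delta_n$ remain. Inspecting the last row of the Hermite normal form \eqref{eqn:HermiteNF} shows $\Lambda\cap\Z_p e_n=p^{\delta_n}\Z_p e_n$; equivalently $\delta_n(\Lambda)=v_p(\#\langle\bar e_n\rangle)$ is the exponent of the order of the image $\bar e_n$ of the last basis vector in $Q=\Z_p^n/\Lambda$. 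Summing over minimal representatives of $V(\mathscr{X}_n)$ is the same as summing over all $\Lambda\leq\Z_p^n$ with $\nu_1(\Lambda)=0$, and grouping these by the isomorphism type $\tau=\Smithpart{\Lambda}$ of $Q$ fixes the monomial $\bm{X}^{\Smithinc{\Lambda}}$. Since sublattices of type $\tau$ correspond bijectively to surjections $\phi\colon\Z_p^n\twoheadrightarrow Q$ modulo the free action of $\Aut Q$, and since $\delta_n=v_p(\#\langle\phi(e_n)\rangle)$ is constant on $\Aut Q$-orbits, the problem reduces to evaluating, for each such $Q$, the weighted surjection count $\tfrac{1}{\lvert\Aut Q\rvert}\sum_{\phi}Y^{v_p(\#\langle\phi(e_n)\rangle)}$.

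Next I would peel off the value $x=\phi(e_n)$. A lifting argument gives
\[
  \#\{\phi\colon\Z_p^n\twoheadrightarrow Q:\phi(e_n)=x\}=\lvert\langle x\rangle\rvert^{\,n-1}\cdot\#\{\Z_p^{n-1}\twoheadrightarrow Q/\langle x\rangle\},
\]
because a tuple completes $x$ to a generating set of $Q$ precisely when its image generates $Q/\langle x\rangle$, and each such image has $\lvert\langle x\rangle\rvert^{n-1}$ lifts. The weighted count thus becomes $\tfrac{1}{\lvert\Aut Q\rvert}\sum_{x\in Q}\lvert\langle x\rangle\rvert^{\,n-1}\,Y^{v_p(\#\langle x\rangle)}\,\#\{\Z_p^{n-1}\twoheadrightarrow Q/\langle x\rangle\}$, and by the orbit--stabiliser theorem I would replace $\tfrac{1}{\lvert\Aut Q\rvert}\sum_{x}$ by a sum over $\Aut Q$-orbits of elements $x$, which are classified by the way the cyclic subgroup $\langle x\rangle\leq Q$ embeds (its cotype). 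The inner factor $\#\{\Z_p^{n-1}\twoheadrightarrow Q/\langle x\rangle\}$ is the classical surjection (ordered-generating-tuple) count for the abelian group $Q/\langle x\rangle$, which I expect to rewrite as a Gaussian multinomial $\binom{n-1}{\,\cdot\,}_{p^{-1}}$ in the parts of $Q/\langle x\rangle$; the orbit sizes together with the factor $\lvert\langle x\rangle\rvert^{n-1}$ should supply the $p$-power prefactors $p^{i_j(n-i_j)}$, $p^{i_j(n-i_j-1)}$ and the shift $Z^{-(n-i_k)}$ in $W_{n,I}$.

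It then remains to organise the orbit sum combinatorially. Writing $I=\{i_1<\cdots<i_\ell\}\subseteq[n-1]$ for the support of $\Smithinc{\Lambda_{\mathrm{m}}}$ — equivalently the set recording where the partition $\tau$ jumps — the admissible orders of $x$ are governed by the distinct parts of $\tau$, and the index $k$ of \Cref{thm:HS.explicit} records which distinct part the order $\#\langle\bar e_n\rangle$ equals; a short telescoping computation confirms that this forces $\delta_n=\nu_{n-i_k+1}=\sum_{j\geq k}\mu_{i_j}$, which matches the fact that exactly the factors $j\geq k$ of $\mathscr{G}_{n,I,k}$ carry the variable $Y$. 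For fixed $k$ there are two ways the cyclic subgroup $\langle x\rangle$ can sit inside $Q$ — as a direct summand or not — and these change the type of $Q/\langle x\rangle$ and hence the surjection count; I expect the first to yield the clean term $Z^{-(n-i_k)}\binom{n-1}{I^{(k)}}_{Z^{-1}}\mathscr{G}_{n,I,k}$ with its shifted index set $I^{(k)}$, and the second to yield the correction term carrying the factor $1-Z^{-i_k}$. Finally, the jump multiplicities $\mu_{i_j}=m_j\geq 1$ are free, so summing the geometric series $\sum_{m_j\geq1}(\cdots)^{m_j}$ collapses into the factors of $\mathscr{G}_{n,I,k}$, and summing over $I\subseteq[n-1]$ produces $\sum_I W_{n,I}$.

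The main obstacle is precisely the orbit/cotype bookkeeping of the second and third steps: computing the $\Aut Q$-orbit sizes and the surjection counts $\#\{\Z_p^{n-1}\twoheadrightarrow Q/\langle x\rangle\}$ for every cotype of $x$, and matching them exactly to the Gaussian multinomials $\binom{n-1}{I^{(k)}}_{Z^{-1}}$ with the correct $p$-exponents and index shifts $I\mapsto I^{(k)}$. In particular, separating the direct-summand contributions (the first sum of $W_{n,I}$) from the remaining contributions (the second sum, with its factor $1-Z^{-i_k}$), and verifying that the latter assemble into the stated correction, is the delicate part; I would pin down the coefficients by carrying out the cotype analysis in general and cross-checking against the $n=2$ example and the data behind \Cref{tab:nu-values}.
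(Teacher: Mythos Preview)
The paper is an FPSAC extended abstract and contains no proof (or even a proof sketch) of \Cref{thm:HS.explicit}; the only nearby methodological remark is that \Cref{thmmain:reciprocity} is obtained by writing $\HS_{n,p}$ as a $p$-adic integral and invoking~\cite{Voll/10}. So there is nothing in the paper to compare your proposal against directly.

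On its own merits, your setup is correct. The identification $\delta_n(\Lambda)=v_p(\#\langle\bar e_n\rangle)$, the reduction to the weighted surjection count $\lvert\Aut Q\rvert^{-1}\sum_{\phi}Y^{v_p(\#\langle\phi(e_n)\rangle)}$, and the peeling-off of $x=\phi(e_n)$ via the lifting identity are all valid, and organising the outer sum by the support $I\subseteq[n-1]$ of $\Smithinc{\Lambda_{\mathrm m}}$ together with geometric series in the multiplicities $m_j=\mu_{i_j}$ is exactly what produces the shape of $\mathscr{G}_{n,I,k}$.

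Where the proposal is too optimistic is the organisation by~$k$. You assert that $\delta_n$ must equal one of the $\ell{+}1$ distinct parts $a_1>\cdots>a_{\ell+1}=0$ of $\tau$, and that for each such $k$ there are just two $\Aut Q$-orbit types (``direct summand or not''). Neither holds: the order $p^{\delta_n}$ of $\bar e_n$ can be $p^d$ for \emph{any} $0\le d\le\nu_n$, not only a distinct part; and for a fixed order the $\Aut Q$-orbits of elements are indexed by their full height sequence, of which there are generally more than two. If you expand the second sum in $W_{n,I}$ you will see that the extra denominator $(1-Z^{i_k(n-i_k-1)}X_{i_k})^{-1}$, combined with the $j=k$ factor of $\mathscr{G}_{n,I,k}$, produces for each fixed $\mu_{i_k}=m$ a whole run of $Y$-exponents rather than a single one; this is precisely how the formula accounts for values of $\delta_n$ lying strictly between consecutive distinct parts of~$\tau$. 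Matching this to the genuine orbit enumeration (equivalently, to the Hall polynomials $g^{\tau}_{(d)\,\mu}$ with cyclic first factor) and checking that the contributions telescope into the stated correction term is the real content of the proof, and it is not the two-case split you describe. Your plan needs to be revised at this point before it can be carried through.
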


Via the various substitutions given above, \Cref{thm:HS.explicit}
yields explicit formulae for the functions $R_{n,p}$ and,
specifically,
$$Z^{\mathsf{C}}_{n,\ell,p}(s) =(1-p^{\ell-s})^{-1}(1-p^{\binom{n+1}{2}-s})^{-1} \sum_{I\subseteq [n-1]}W_{n,I}\left( p, \left( p^{\binom{i+1}{2}+\ell-ns}\right)_{i=1}^n, p^{-\ell}\right).$$

In the next theorem we show that the primitive local Satake generating
function can be viewed as a ``$p$-analogue'' of the fine Hilbert series
of a Stanley--Reisner ring. Let $V$ be a finite set. If
$\Delta\subseteq 2^V$ is a simplicial complex on~$V$, then the
Stanley--Reisner ring of $\Delta$ over a ring $K$ is
$$
  K[\Delta] = K[X_v ~|~ v\in V] / (\prod_{v\in \sigma} X_v ~|~ \sigma\in 2^V \setminus \Delta).  
$$

\begin{thmabc}\label{thmmain:SR-ring}
  For all $n\in\N$, let $\Delta_n$ be the $n$-simplex with vertices $[n]$ and
  $\Delta = \mathrm{sd}(\partial\Delta_n)$, the barycentric subdivision of
  boundary of $\Delta_n$, with vertices given by the nonempty subsets of $[n]$.
  Let $\bm{y} = (y_I : \varnothing \neq I\subseteq [n])$ and $\phi:\Z\llbracket
  \bm{y}\rrbracket \rightarrow \Z\llbracket \bm{x}, X\rrbracket$ via $y_I \mapsto x_0X\prod_{i\in I} x_i$. Then 
  \begin{align*} 
    \left.\Satakegen_{n,p}^{\prim}(\bm{x}, X)\right|_{p\to 1} &= \phi(\mathrm{Hilb}(\Z[\Delta]; \bm{y})) = \sum_{\sigma\in\Delta} \prod_{J\in \sigma} \dfrac{\phi(y_J)}{1-\phi(y_J)} . 
  \end{align*} 
\end{thmabc}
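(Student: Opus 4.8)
The plan is to start from Andrianov's formula (\Cref{thm:Andrianov-formula}) and take the $p\to1$ specialization termwise. The only $p$-dependence in the summand is the prefactor $p^{\langle\bm d,\Smithpart{\Lambda_{\mathrm{m}}}\rangle-\langle\bm a,\Hermite{\Lambda_{\mathrm{m}}}\rangle}$, which becomes $1$ at $p=1$, so $\left.\Satakegen_{n,p}^{\prim}\right|_{p\to1}=\sum_{[\Lambda]}\prod_{i=1}^n x_i^{\delta_i(\Lambda_{\mathrm{m}})}(x_0X)^{\nu_n(\Lambda_{\mathrm{m}})}$, read as the value at $p=1$ of a rational function. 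As the displayed $n=2$ example shows, the $\GSp_{2n}$ Satake generating function $\Satakegen_{n,p}$ has the classical product denominator $\prod_{J\subseteq[n]}(1-\phi(y_J))$ with $\phi(y_J)=x_0X\prod_{i\in J}x_i$; by \eqref{eqn:prim-Satake} the factors indexed by $J=\varnothing$ and $J=[n]$ cancel, so $\Satakegen_{n,p}^{\prim}$, and hence its value at $p=1$, is a rational function with denominator $\prod_{\varnothing\neq J\subsetneq[n]}(1-\phi(y_J))$.

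On the Stanley--Reisner side, the standard face-formula for the fine Hilbert series gives $\phi(\Hilb(\Z[\Delta];\bm y))=\sum_{\sigma\in\Delta}\prod_{J\in\sigma}\frac{\phi(y_J)}{1-\phi(y_J)}$, where the faces $\sigma$ of $\Delta=\sd(\partial\Delta_n)$ are exactly the chains $J_1\subsetneq\cdots\subsetneq J_r$ of nonempty proper subsets of $[n]$, i.e.\ the order complex of the Boolean lattice with $\varnothing$ and $[n]$ removed. Cleared over the common denominator $\prod_{\varnothing\neq J\subsetneq[n]}(1-\phi(y_J))$, this has the same pole structure as the left-hand side, so it suffices to prove the chain-expansion $\left.\Satakegen_{n,p}^{\prim}\right|_{p\to1}=\sum_{J_1\subsetneq\cdots\subsetneq J_r}\prod_{t=1}^r\frac{\phi(y_{J_t})}{1-\phi(y_{J_t})}$, the empty chain contributing the constant $1$.

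To prove this I would organise the lattice sum by the configuration of $\F_p$-subspaces that a primitive class induces: the successive layers of the $p$-adic filtration of $\Z_p^n/\Lambda_{\mathrm{m}}$ determine a flag of subspaces of $\F_p^n$, whose dimension-jump pattern records a collection $S$ of subsets of $[n]$. Summing the geometric (Hermite) contributions of all classes refining a fixed $S$ produces $w_S(p)\prod_{J\in S}\frac{\phi(y_J)}{1-\phi(y_J)}$, where $w_S(p)\in\Z[p^{-1}]$ is a product of Gaussian binomials counting the relevant $\F_p$-flags. The $n=2$ identity $\Satakegen_{2,p}^{\prim}=\frac{1-uv}{(1-u)(1-v)}+(1-p^{-1})\frac{uv}{(1-u)(1-v)}$, with $u=\phi(y_{\{1\}})$ and $v=\phi(y_{\{2\}})$, is the prototype, the correction being attached to the incomparable pair $\{\{1\},\{2\}\}$. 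The key point is the $p\to1$ collapse: by the ``field with one element'' degeneration, $w_S(1)=1$ when $S$ is a chain and $w_S(1)=0$ otherwise, since configurations containing an incomparable (non-nested) pair carry a factor vanishing at $p=1$. Only chains survive, and the chain-expansion follows.

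The main obstacle is the bookkeeping behind this collapse: establishing the flag expansion with the correct coefficients $w_S(p)$ and proving $w_S(1)=[\,S\text{ is a chain}\,]$. This requires a bijection between primitive lattice classes and $\F_p$-flags decorated by Hermite off-diagonal data, together with an Euler-characteristic-type cancellation showing that every non-nested configuration contributes a coefficient divisible by $(1-p^{-1})$. Throughout one must track the Smith datum $\nu_n$, which supplies the exponent of $x_0X$, against the Hermite diagonal $\delta$, which supplies the exponents of the $x_i$, since the target monomial $\phi(y_J)=x_0X\prod_{i\in J}x_i$ couples the two; matching this coupling to the subsets $J$ appearing in each chain is precisely where the form of $\Phi$ in \eqref{eqn:Phi} and the identity $\sum_i\mu_i=\nu_n$ enter.
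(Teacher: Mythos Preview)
The paper is an extended abstract and contains no proof of this theorem, so there is nothing to compare against. Assessing your proposal on its own terms: it is a strategy, not a proof, and you flag this yourself (``The main obstacle is the bookkeeping behind this collapse'').

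Two concrete gaps. First, setting $p=1$ in the \emph{summand} of \Cref{thm:Andrianov-formula} is not a well-defined operation: the index set of homothety classes of $\Zp$-lattices is itself $p$-dependent, so the equation $\left.\Satakegen_{n,p}^{\prim}\right|_{p\to1}=\sum_{[\Lambda]}\prod_i x_i^{\delta_i}(x_0X)^{\nu_n}$ has no literal meaning. One must sum to a closed rational form first and only then specialise; the content of the theorem lies exactly in controlling that passage. Second, and more seriously, the decomposition $\Satakegen_{n,p}^{\prim}=\sum_S w_S(p)\prod_{J\in S}\frac{\phi(y_J)}{1-\phi(y_J)}$ with $w_S(1)=[S\text{ is a chain}]$ is never constructed: you specify neither the range of $S$, nor the map from lattice classes to configurations $S$, nor why every non-nested configuration acquires a factor vanishing at $p=1$. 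The $n=2$ check is encouraging but is the base case, not the argument. (Also, the general denominator $\prod_{J\subseteq[n]}(1-\phi(y_J))$ for $\Satakegen_{n,p}$ is a theorem of Andrianov, not something the $n=2$ example ``shows''; cite it as such.) Your final paragraph correctly names the two missing ingredients---the decorated-flag bijection and the Euler-characteristic cancellation---but the proposal ends where the proof should begin.

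If you pursue this route, the paper's machinery suggests working on the $\HS$ side \emph{before} applying~$\Phi$: an explicit expansion of $\HS_{n,p}^{\prim}$ as a sum over $I\subseteq[n-1]$ weighted by Gaussian multinomials (of which \Cref{thm:HS.explicit} is a specialisation) makes the $p\to1$ degeneration transparent, since Gaussian binomials become ordinary binomials at $q=1$ and the residual combinatorics is exactly that of the order complex of the truncated Boolean lattice.
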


With \Cref{thmmain:SR-ring}, we come full circle and relate the local
Satake generating function $\Satakegen_{n,p}$ to the Ehrhart series of
the $n$-cube.

\begin{cor}\label{conj:wurfel}
  For all $n\in\N$, let $P$ be the $n$-cube. Then 
  \begin{align*} 
    \left.\Satakegen_{n,p}(\bm{1}, X)\right|_{p\to 1} &= \mathrm{Ehr}_P(X) = \dfrac{\mathrm{E}_n(X)}{(1-X)^{n+1}},
  \end{align*}
  where $\mathrm{E}_n(X)=\sum_{\sigma\in S_n}X^{\mathrm{des}(\sigma)}$ is the
  Eulerian polynomial. 
\end{cor}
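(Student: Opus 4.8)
The plan is to deduce the corollary from \Cref{thmmain:SR-ring} by specializing $\bm{x} = \bm{1}$ and peeling off the two primitive factors. First I would set $x_0 = x_1 = \cdots = x_n = 1$ in the defining relation \eqref{eqn:prim-Satake}. Since each of $(1 - x_0 X)$ and $(1 - x_0 x_1\cdots x_n X)$ then collapses to $(1-X)$, this yields $\Satakegen_{n,p}^{\prim}(\bm{1}, X) = (1-X)^2\,\Satakegen_{n,p}(\bm{1}, X)$, so it suffices to evaluate $\left.\Satakegen_{n,p}^{\prim}(\bm{1}, X)\right|_{p\to 1}$ and divide by $(1-X)^2$.

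Next I would apply \Cref{thmmain:SR-ring}. Under $\bm{x} = \bm{1}$ the map $\phi$ sends every generator $y_I$ to $\phi(y_I) = x_0 X\prod_{i\in I} x_i = X$, so the fine Hilbert series degenerates to the coarse, one-variable Hilbert series of the Stanley--Reisner ring $\Z[\Delta]$:
\[
  \left.\Satakegen_{n,p}^{\prim}(\bm{1}, X)\right|_{p\to 1} = \sum_{\sigma\in\Delta}\left(\frac{X}{1-X}\right)^{|\sigma|} = \sum_{j\geq 0} f_{j-1}(\Delta)\,\frac{X^j}{(1-X)^j} = \frac{h_\Delta(X)}{(1-X)^{\dim\Delta + 1}},
\]
where $f_{j-1}(\Delta)$ is the number of faces of $\Delta$ with $j$ vertices and $h_\Delta$ is the $h$-polynomial. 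The combinatorial heart of the argument is the identification of this data for $\Delta = \sd(\partial\Delta_n)$: this complex triangulates the sphere $S^{n-2}$, whence $\dim\Delta + 1 = n-1$, and its $h$-polynomial is the Eulerian polynomial, $h_\Delta(X) = \mathrm{E}_n(X)$. Substituting gives
\[
  \left.\Satakegen_{n,p}(\bm{1}, X)\right|_{p\to 1} = \frac{1}{(1-X)^2}\cdot\frac{\mathrm{E}_n(X)}{(1-X)^{n-1}} = \frac{\mathrm{E}_n(X)}{(1-X)^{n+1}}.
\]

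To close the corollary I would recall that the $n$-cube $P$ has Ehrhart polynomial $(T+1)^n$, so $\mathrm{Ehr}_P(X) = \sum_{T\geq 0}(T+1)^n X^T$; the Worpitzky/Eulerian generating-function identity rewrites this as $\mathrm{E}_n(X)/(1-X)^{n+1}$, matching the previous display. This simultaneously proves both asserted equalities.

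The main obstacle is the combinatorial input in the middle paragraph, namely that the $h$-vector of the barycentric subdivision of the boundary of the simplex is given by the Eulerian numbers. I would obtain it from the standard identification of $\sd(\partial\Delta_n)$ with the order complex of the proper part of the Boolean lattice $B_n$, whose facets correspond bijectively to permutations in $S_n$ via saturated chains and whose $h$-polynomial records descents; alternatively it can simply be cited. Everything else is a formal specialization of \Cref{thmmain:SR-ring} together with \eqref{eqn:prim-Satake}.
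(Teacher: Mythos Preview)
Your proposal is correct and follows essentially the same route as the paper: specialize \eqref{eqn:prim-Satake} at $\bm{x}=\bm{1}$ to strip off $(1-X)^2$, apply \Cref{thmmain:SR-ring} so that every $\phi(y_I)$ collapses to $X$, and then invoke the combinatorial identity for $\sd(\partial\Delta_n)$ together with the Worpitzky/Ehrhart identity for the cube. The paper simply cites \cite[Theorem~9.1]{Petersen/15} for the step you unpack via the $h$-vector of the order complex of the proper part of the Boolean lattice, so your argument is a mild elaboration of the same proof rather than a different one.
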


\begin{proof} 
  It follows from \Cref{thmmain:SR-ring} that
  \begin{align} \label{eqn:reduced}
    (1-X)^2 \left.\Satakegen_{n,p}(\bm{1}, X)\right|_{p\to 1} &= \sum_{\sigma\in\Delta} \prod_{J\in \sigma} \dfrac{X}{1-X},
  \end{align} 
  where $\Delta$ is the barycentric subdivision of the boundary of the
  $n$-simplex. From~\cite[Theore.~9.1]{Petersen/15} and
  Equation~\eqref{eqn:reduced} it follows that
  \begin{align*} 
    \left.\Satakegen_{n,p}(\bm{1}, X)\right|_{p\to 1} &= \dfrac{\mathrm{E}_n(X)}{(1-X)^{n+1}} = \sum_{k\geq 0} (k+1)^nX^k = \mathrm{Ehr}_P(X).\qedhere
  \end{align*} 
%  The penultimate equality being the Carlitz identity. 
\end{proof}

\subsection{The type--A story}
\label{sec:type-A}

Our work was inspired by Gunnells and Rodriguez
Villegas. In~\cite{GRV/07} they considered type-$\mathsf{A}$ versions
of some of the questions outlined above. We paraphrase parts
of~\cite{GRV/07} from the perspective of our work in type~$\mathsf{C}$.
%We define a function
%$$
%%  \mathcal{C}_{\ell} : \N \to \Q, \quad   k\longmapsto \sum_{\substack{\Z^n\le%q \Lambda \leq \Q^n \\ |\Lambda : \Z^n| = k}} \frac{c_\ell^{\Lambda}(P)}{c_{\el%l}(P)},
%$$
For a prime $p$ we define the (\emph{local}) \emph{Ehrhart--Hecke zeta
function} (of type $\mathsf{A}$) as
%\begin{equation}\label{def:Z.global}
%  \mathcal%{Z}^{\mathsf{A}}_{n, \ell}(s) = \sum_{\substack{\Z^n\leq \Lambda \le%q \Q^n \\ |\Lambd%a : \Z^n| <\infty}}\frac{c_\ell^{\Lambda}(P)}{c_{\ell}(P)}|\Lambda:\Z^n|^{-s}.% = \sum_{m%\in\N}\mathcal{C}_\ell(m)m^{-s}.
%\end{equation}
%Likewise we define, for a prime $p$,
\begin{equation}\label{def:Z.local}
  \mathcal{Z}^{\mathsf{A}}_{n,\ell,p}(s) = \sum_{\substack{\Z_p^n\leq \Lambda \leq \Q_p^n \\ |\Lambda : \Z_p^n| <\infty}}\frac{c_\ell^{\Lambda}(P)}{c_{\ell}(P)}|\Lambda:\Zp^n|^{-s}.% = \sum_{m\geq 0}\mathcal{C}_\ell(p^m)p^{-ms}.
\end{equation} 

Let $\Gamma_{n}^{\mathsf{A}} = \GL_n(\Z)$ and $G_n^{\mathsf{A}} =
\Mat_n(\Z)\cap\GL_n(\Q)$. For $m\in\N$, let
\begin{align*} 
  D_n^{\mathsf{A}}(m) &= \{ g\in G_n^{\mathsf{A}} ~|~ |\det(g)| = m \},
\end{align*}
so $D_n^{\mathsf{A}}(m)$ is a finite union of double cosets relative to $\Gamma_{n}^{\mathsf{A}}$. We define 
\begin{align*} 
  T_n^{\mathsf{A}}(m) &= \sum_{g\in \Gamma_{n}^{\mathsf{A}}\setminus D_n^{\mathsf{A}}(m)/\Gamma_{n}^{\mathsf{A}}} \Gamma_{n}^{\mathsf{A}} g \Gamma_{n}^{\mathsf{A}} ,
\end{align*} 
where the sum runs over a set of representatives of the double cosets,
which is an element of the Hecke algebra determined by
$(\Gamma_n^{\mathsf{A}},G_n^{\mathsf{A}})$. Moreover, if
$\gcd(m,m')=1$, then
\[ 
  T_n^{\mathsf{A}}(m)T_n^{\mathsf{A}}(m') = T_n^{\mathsf{A}}(mm'). 
\]
For $k\in [n]_0$ define $\pi_k(p) = \diag(1, \dots, 1, \overbrace{p, \dots, p}^k)$ and  %we define a special set of operators:
$
  T_n^{\mathsf{A}}(p, k) =  \Gamma_{n}^{\mathsf{A}}\pi_{k}(p)\Gamma_{n}^{\mathsf{A}},
$ 
which decomposes into a finite (disjoint) union of right cosets relative to
$\Gamma_{n}^{\mathsf{A}}$. 

Gunnells and Rodriguez Villegas~\cite{GRV/07}
considered the following action of the Hecke algebra on the Ehrhart polynomial
$E(P)=E^{\Lambda_0}(P)$ of $P$:
\begin{align}\label{eqn:Hecke-action}
  T_n^{\mathsf{A}}(p, k) E(P) &= \sum_{g\in \Gamma_{n}^{\mathsf{A}}\pi_{k}(p)\Gamma_{n}^{\mathsf{A}} / \Gamma_{n}^{\mathsf{A}}} E(g\cdot P) ,
\end{align} 
where the sum runs over a set of right coset representatives. The
action in~\eqref{eqn:Hecke-action} is independent of the chosen
representatives since $\Gamma_n^{\mathsf{A}}$ comprises bijections of
$\Z^n$. Our definition in~\eqref{eqn:Hecke-action} differs
from~\cite{GRV/07} only cosmetically via~\eqref{eqn:Ehrhart-id}.

Denote by $\mathrm{Gr}(\ell, n, p)$ the set of $\ell$-dimensional
subspaces in $\Fp^n$. For $n\in\N$, $\ell, k\in [n]_0$, and $U\in
\mathrm{Gr}(\ell, n, p)$, define
\begin{align*} 
  \nu_{n, k, \ell}^{\mathsf{A}}(p) &= \sum_{W\in \mathrm{Gr}(k, n,
    p)}\# (U\cap W).
\end{align*} 
%Since the sum runs over all of $\mathrm{Gr}(\ell, n)(\mathbb{F}_p)$, the
%value for $\nu_{n,k,\ell}^{\mathsf{A}}(p)$ is independent of the choice of $U$. 
Let $\psi_{n,\ell}^{\mathsf{A}} : \Q[x_1^{\pm 1}, \dots, x_n^{\pm
    1}]\to \Q$ be given by $x_n\mapsto p^{\ell}$ and $x_i\mapsto p^i$
for all $i\in [n-1]$. Let further $\omega$ denote the Satake
isomorphism from the $p$-primary part of the Hecke algebra associated
with $(\Gamma_n^{\mathsf{A}}, G_n^{\mathsf{A}})$, written
$\mathcal{H}_p^{\mathsf{A}}$, to the symmetric subring of $\Q[x_1^{\pm
    1}, \dots, x_n^{\pm 1}]$. %; see~\cite[Thm.~3.2.20]{Andrianov/87}.

Let $s_{n,k}(x_1, \dots, x_{n})$ be the (homogeneous) elementary
symmetric polynomial of degree $k$, and set $s_{n, -1}=0$.

\begin{thm}[{\cite{GRV/07}}]\label{thm:A}
  For $n\in\N$, $k,\ell\in [n]_0$, and a prime $p$, we have
  \begin{align*} 
    \nu_{n,k,\ell}^{\mathsf{A}}(p) = p^k\binom{n-1}{k}_p + p^{\ell}\binom{n-1}{k-1}_p = \psi_{n,\ell}^{\mathsf{A}}(\omega(T_n^{\mathsf{A}}(p,k))). %c_\ell(P)^{-1}\sum_{g\in \Gamma_{n}^{\mathsf{A}}\pi_{k}(p)\Gamma_{n}^{\mathsf{A}}/ \Gamma_{n}^{\mathsf{A}}} c_{\ell}^{\Lambda_{g^{-1}}}(P). 
  \end{align*}
  Moreover, 
  \[ 
    \mathcal{Z}_{n,\ell,p}^{\mathsf{A}}(s) = (1-p^{\ell-s})^{-1} \prod_{k=1}^{n-1} (1-p^{k-s})^{-1}. 
  \]
\end{thm}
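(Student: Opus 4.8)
The plan is to prove the three assertions of the theorem in turn. First I would establish the closed form for $\nu_{n,k,\ell}^{\mathsf{A}}(p)$ directly from its definition as a subspace count, by switching the order of summation (double counting the incidences). Fixing $U\in\mathrm{Gr}(\ell,n,p)$,
\begin{align*}
  \nu_{n,k,\ell}^{\mathsf{A}}(p) = \sum_{W\in\mathrm{Gr}(k,n,p)}\#(U\cap W) = \sum_{v\in U}\#\{W\in\mathrm{Gr}(k,n,p) : v\in W\}.
\end{align*}
The zero vector lies in every $W$, contributing $\binom{n}{k}_p$, while each of the $p^\ell-1$ nonzero vectors of $U$ lies in $\binom{n-1}{k-1}_p$ subspaces, since the $k$-subspaces containing a fixed line $\langle v\rangle$ correspond to the $(k-1)$-subspaces of $\Fp^{n}/\langle v\rangle\cong\Fp^{n-1}$. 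Hence $\nu_{n,k,\ell}^{\mathsf{A}}(p)=\binom{n}{k}_p+(p^\ell-1)\binom{n-1}{k-1}_p$, and the $q$-Pascal identity $\binom{n}{k}_p=\binom{n-1}{k-1}_p+p^k\binom{n-1}{k}_p$ rearranges this into $p^k\binom{n-1}{k}_p+p^\ell\binom{n-1}{k-1}_p$. In particular the count depends on $U$ only through $\dim U=\ell$, which justifies the notation.

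Next I would match this with the Satake side. In the normalisation used here the Satake transform of the fundamental operator attached to $\pi_k(p)$ is $\omega(T_n^{\mathsf{A}}(p,k))=p^{-\binom{k}{2}}s_{n,k}(x_1,\dots,x_n)$. Applying $\psi_{n,\ell}^{\mathsf{A}}$ and splitting off the last variable via $s_{n,k}=s_{n-1,k}+x_n\,s_{n-1,k-1}$ gives
\begin{align*}
  \psi_{n,\ell}^{\mathsf{A}}(s_{n,k}) = s_{n-1,k}(p,\dots,p^{n-1}) + p^\ell\, s_{n-1,k-1}(p,\dots,p^{n-1}),
\end{align*}
and the $q$-evaluation $s_{m,j}(p,p^2,\dots,p^{m})=p^{\binom{j+1}{2}}\binom{m}{j}_p$ (a form of the $q$-binomial theorem) turns the right-hand side into $p^{\binom{k+1}{2}}\binom{n-1}{k}_p+p^{\ell+\binom{k}{2}}\binom{n-1}{k-1}_p$. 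Multiplying by $p^{-\binom{k}{2}}$ and using $\binom{k+1}{2}-\binom{k}{2}=k$ recovers $p^k\binom{n-1}{k}_p+p^\ell\binom{n-1}{k-1}_p=\nu_{n,k,\ell}^{\mathsf{A}}(p)$, proving the second equality.

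Finally, for the zeta function I would pass from the generators to the total Hecke operators. Each finite-index lattice $\Z_p^n\leq\Lambda$ corresponds to a single coset, and grouping by index $p^\alpha$ reassembles the double cosets making up $T_n^{\mathsf{A}}(p^\alpha)$. Invoking the eigenfunction property---that $\Lambda\mapsto c_\ell^\Lambda(P)/c_\ell(P)$ is a Hecke eigenfunction with eigenvalue $\nu_{n,k,\ell}^{\mathsf{A}}(p)$ under $T_n^{\mathsf{A}}(p,k)$, equivalently with Satake parameters $(p,p^2,\dots,p^{n-1},p^\ell)$---the inner sum over index-$p^\alpha$ lattices equals $\psi_{n,\ell}^{\mathsf{A}}(\omega(T_n^{\mathsf{A}}(p^\alpha)))$, the eigenvalue of the total operator. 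Thus, writing $X=p^{-s}$,
\begin{align*}
  \mathcal{Z}_{n,\ell,p}^{\mathsf{A}}(s) = \left.\psi_{n,\ell}^{\mathsf{A}}\!\left(\sum_{\alpha\geq0}\omega(T_n^{\mathsf{A}}(p^\alpha))X^\alpha\right)\right|_{X=p^{-s}},
\end{align*}
and the standard rationality of the $\GL_n$ Hecke series, $\sum_\alpha\omega(T_n^{\mathsf{A}}(p^\alpha))X^\alpha=\prod_{i=1}^n(1-x_iX)^{-1}$ (consistent with the normalisation above, as one checks at low degree), yields $\prod_{i=1}^n(1-\psi_{n,\ell}^{\mathsf{A}}(x_i)p^{-s})^{-1}=(1-p^{\ell-s})^{-1}\prod_{k=1}^{n-1}(1-p^{k-s})^{-1}$. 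The main obstacle is not any of these computations but the eigenfunction property itself: that the purely combinatorial subspace count governs the Hecke variation of the $\ell$th Ehrhart coefficient. This is the geometric heart of \cite{GRV/07}, proved by reducing the Hecke-averaged coefficient to a count of lattice points modulo $p$, where full-dimensionality of $P$ forces the relevant incidence data to be that of a generic $\ell$-dimensional subspace $U\subseteq\Fp^n$, accounting both for the formula and for its independence of $P$.
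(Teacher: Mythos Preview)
Your proof is correct and follows essentially the same route as the paper: the double-counting argument you give is precisely the content of \cite[Lem.~3.3]{GRV/07} that the paper cites, your symmetric-function computation matches the paper's use of the Macdonald identity and \cite[Lem.~3.2.21]{Andrianov/87}, and your product formula $\sum_\alpha\omega(T_n^{\mathsf{A}}(p^\alpha))X^\alpha=\prod_i(1-x_iX)^{-1}$ is exactly Tamagawa's identity~\eqref{eqn:tamagawa} after applying $\omega$ (the $p^{\binom{k}{2}}$ factors cancel against those in the Satake transform). Both arguments invoke the eigenfunction property from \cite{GRV/07} as a black box, as you rightly flag.
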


\begin{proof} 
  First we prove the claims concerning
  $\nu_{n,k,\ell}^{\mathsf{A}}(p)$. Therefore,
  \begin{align*} 
    \nu_{n,k,\ell}^{\mathsf{A}}(p) &= \binom{n}{k}_p - \binom{n-1}{k-1}_p + p^{\ell}\binom{n-1}{k-1}_p & (\text{\cite[Lem.~3.3]{GRV/07}}) \\
    &= p^k \binom{n-1}{k}_p + p^{\ell} \binom{n-1}{k-1}_p & (\text{Pascal identity}) \\
    &= p^k s_{n-1,k}(1, p, \dots, p^{n-2}) + p^{\ell} s_{n-1,k-1}(1, p, \dots, p^{n-2}) &  (\text{\cite[Ex.~I.2.3]{Macdonald/95}}) \\
    % &= p^{-\binom{k}{2}}\psi_{n,\ell}^{\mathsf{A}}(s_{n-1,k} + x_n s_{n-1,k-1}) & \\
    &= p^{-\binom{k}{2}}\psi_{n,\ell}^{\mathsf{A}}(s_{n,k}) \\ 
    &= \psi_{n,\ell}^{\mathsf{A}}(\omega(T_n^{\mathsf{A}}(p,k))). & (\text{\cite[Lem.~3.2.21]{Andrianov/87}})
  \end{align*} 

  We now tend to the last claim.  Tamagawa~\cite{Tamagawa/63}
  established the identity
  \begin{equation}\label{eqn:tamagawa}
    \sum_{m\geq 0} T_n^{\mathsf{A}}(p^m) X^m = \left( \sum_{k=0}^n (-1)^k p^{\binom{k}{2}}T_{n}^{\mathsf{A}}(p,k)X^k\right)^{-1} \in \mathcal{H}_p^{\mathsf{A}} \llbracket X \rrbracket .
  \end{equation}
  Applying $\psi_{n,\ell}^{\mathsf{A}}\circ \omega$ to~\eqref{eqn:tamagawa} and
  setting $X=p^{-s}$, we have 
  \begin{equation*}
    \sum_{m\geq 0} \psi_{n,\ell}^{\mathsf{A}}(\omega(T_n^{\mathsf{A}}(p^m))) p^{-ms} = \left( \sum_{k=0}^n \psi_{n,\ell}^{\mathsf{A}}(s_{n,k})(-p)^{-ks}\right)^{-1} = (1-p^{\ell-s})^{-1} \prod_{k=1}^{n-1} (1-p^{k-s})^{-1} .
  \end{equation*}
  Since $\nu_{n,k,\ell}^{\mathsf{A}}(p)$ is an eigenvalue for $T_n(p,k)$, it
  follows that 
  \[ 
    \mathcal{Z}^{\mathsf{A}}_{n,\ell, p}(s) = \sum_{m\geq 0} \psi_{n,\ell}^{\mathsf{A}}(\omega(T_n^{\mathsf{A}}(p^m))) p^{-ms}. \qedhere 
  \]
\end{proof}

%The Euler product decomposition of $\mathcal{Z}^{\mathsf{A}}_{n,\ell}(s)$ now
%follows from the fact that Hecke operators are multiplicative. That is, if
%$\mathrm{gcd}(m,m')$, then 
%\[ 
%  \mathcal{C}_{\ell}(m)\cdot \mathcal{C}_{\ell}(m') =
%  \mathcal{C}_{\ell}(mm').
%\]
%
\begin{cor}
  Let $\zeta(s)$ be the Riemann zeta function. For $n\in\N$ and $\ell\in [n]_0$,
  we have
  \[ 
  %    \mathcal{Z}_{n, \ell}^{\mathsf{A}}(s) =
  \prod_{\text{prime }p} \mathcal{Z}_{n, \ell, p}^{\mathsf{A}}(s) = \zeta(s-\ell) \prod_{k=1}^{n-1} \zeta(s-k) .
  \] 
\end{cor}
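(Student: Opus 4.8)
The plan is to recognize the global product as an Euler product and match each local factor against a shifted Riemann zeta function. First I would invoke \Cref{thm:A}, which supplies the closed form
\[
  \mathcal{Z}_{n,\ell,p}^{\mathsf{A}}(s) = (1-p^{\ell-s})^{-1}\prod_{k=1}^{n-1}(1-p^{k-s})^{-1}
\]
for every prime $p$. Recalling the Euler product $\zeta(s)=\prod_{\text{prime }p}(1-p^{-s})^{-1}$, valid for $\operatorname{Re}(s)>1$, the shift $s\mapsto s-m$ gives $\zeta(s-m)=\prod_{\text{prime }p}(1-p^{m-s})^{-1}$ for $\operatorname{Re}(s)>m+1$. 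The target identity is then just the statement that taking $\prod_p$ of the local formula reproduces one such factor for the exponent $\ell$ and one for each $k\in\{1,\dots,n-1\}$.

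Next I would take the product over all primes and rearrange. Since each local term is a finite product of geometric factors, for $\operatorname{Re}(s)$ large enough — concretely $\operatorname{Re}(s)>\max(\ell,n-1)+1$, which guarantees $\operatorname{Re}(s)>\ell+1$ and $\operatorname{Re}(s)>k+1$ for all $k\leq n-1$ — each of the $n$ Euler products converges absolutely. Absolute convergence licenses interchanging the finite product over $k$ with the infinite product over $p$, yielding
\[
  \prod_{\text{prime }p}\mathcal{Z}_{n,\ell,p}^{\mathsf{A}}(s)
  = \Bigl(\prod_{\text{prime }p}(1-p^{\ell-s})^{-1}\Bigr)\prod_{k=1}^{n-1}\Bigl(\prod_{\text{prime }p}(1-p^{k-s})^{-1}\Bigr)
  = \zeta(s-\ell)\prod_{k=1}^{n-1}\zeta(s-k),
\]
which is the claimed formula on this right half-plane.

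The computation is essentially immediate once \Cref{thm:A} is in hand, so the only point requiring genuine care is analytic rather than algebraic: I must verify absolute convergence of all $n$ Euler products in a common right half-plane before interchanging the products, and then invoke uniqueness of analytic continuation to extend the identity to all $s$ where both sides are defined, each side being meromorphic in $s$. This is the main — though entirely routine — obstacle.
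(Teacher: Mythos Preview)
Your proposal is correct and matches the paper's approach: the corollary is stated without proof, as an immediate consequence of the local formula in \Cref{thm:A} together with the Euler product for $\zeta(s)$. Your added care about absolute convergence and analytic continuation is more detail than the paper supplies, but is exactly the routine justification one would give if pressed.
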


\section{Examples}

\subsection{Hecke eigenfunctions}

We give some explicit examples, showing in \Cref{fig:eigenfunctions} that the
eigenfunctions of \Cref{thmmain:Ehrhart-Satake} depend significantly on the
polytope. We do this by displaying a graph whose vertices correspond to
homothety classes of lattices. We evaluate the functions
$\mathscr{E}_{n,p\ell,P}$ on $\Lambda_{\mathrm{m}}$ for each homothety class
$[\Lambda]$. 

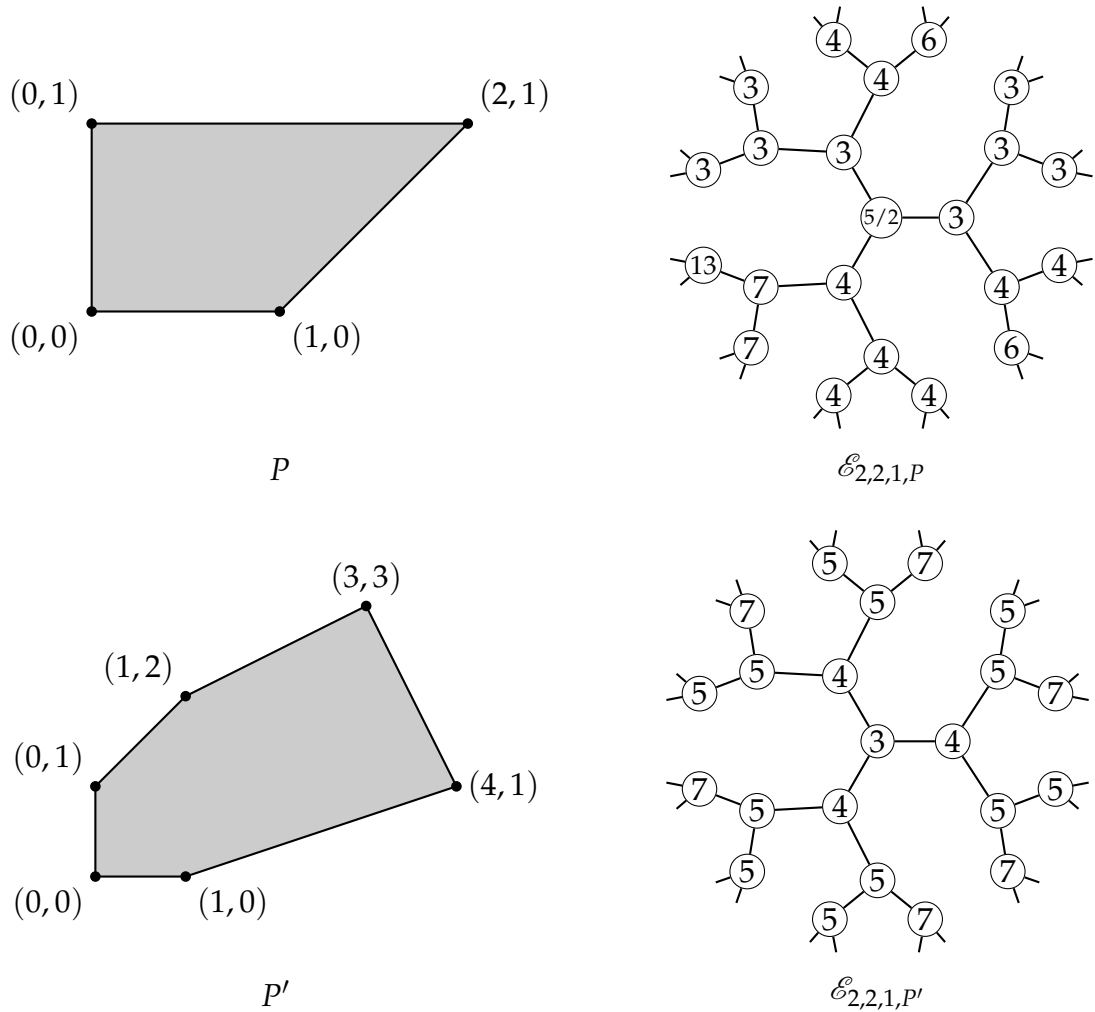
\begin{figure}[hb!]
  \centering 
  \begin{subfigure}[b]{\textwidth}
    \centering 
    \begin{tikzpicture}
      \pgfmathsetmacro{\unit}{2.5}
      \node at (-4,0) {
        \begin{tikzpicture}
          % Define the vertices of the polytope
          \coordinate (A) at (0,0);
          \coordinate (B) at (\unit,0);
          \coordinate (C) at (0,\unit);
          \coordinate (D) at (2*\unit,\unit);
        
          % Draw the edges of the convex hull
          \draw[thick, fill=black!20!white] (A) -- (B) -- (D) -- (C) -- cycle;
        
          % Draw the vertices
          \foreach \point in {A, B, C, D}
            \fill[black] (\point) circle (2pt);

          % Optional: label the vertices
          \node[below left] at (A) {$(0,0)$};
          \node[below right] at (B) {$(1,0)$};
          \node[above left] at (C) {$(0,1)$};
          \node[above right] at (D) {$(2,1)$};
        \end{tikzpicture}
      };
      \node at (4,0) {
        \begin{tikzpicture}
          \pgfmathsetmacro{\r}{1}
          \pgfmathsetmacro{\rr}{1.85}
          \pgfmathsetmacro{\rrr}{2.45}
          \pgfmathsetmacro{\rrrr}{3}
          % dist = 0
          \node[circle, draw, fill=white, inner sep=0.3pt] (0) at (0,0) {{\scriptsize $5/2$}};
          % dist = 1
          \node[circle, draw, fill=white, inner sep=1pt] (a) at (120:\r) {$3$};
          \node[circle, draw, fill=white, inner sep=1pt] (b) at (240:\r) {$4$};
          \node[circle, draw, fill=white, inner sep=1pt] (c) at (360:\r) {$3$};
          % dist = 2
          \node[circle, draw, fill=white, inner sep=1pt] (a1) at (90:\rr) {$4$};
          \node[circle, draw, fill=white, inner sep=1pt] (a2) at (150:\rr) {$3$};
          \node[circle, draw, fill=white, inner sep=1pt] (b1) at (210:\rr) {$7$};
          \node[circle, draw, fill=white, inner sep=1pt] (b2) at (270:\rr) {$4$};
          \node[circle, draw, fill=white, inner sep=1pt] (c1) at (330:\rr) {$4$};
          \node[circle, draw, fill=white, inner sep=1pt] (c2) at (390:\rr) {$3$};
          % dist = 3 
          \node[circle, draw, fill=white, inner sep=1pt] (a1i) at (75:\rrr) {$6$};
          \node[circle, draw, fill=white, inner sep=1pt] (a1ii) at (105:\rrr) {$4$};
          \node[circle, draw, fill=white, inner sep=1pt] (a2i) at (135:\rrr) {$3$};
          \node[circle, draw, fill=white, inner sep=1pt] (a2ii) at (165:\rrr) {$3$};
          \node[circle, draw, fill=white, inner sep=0.5pt] (b1i) at (195:\rrr) {{\footnotesize $13$}};
          \node[circle, draw, fill=white, inner sep=1pt] (b1ii) at (225:\rrr) {$7$};
          \node[circle, draw, fill=white, inner sep=1pt] (b2i) at (255:\rrr) {$4$};
          \node[circle, draw, fill=white, inner sep=1pt] (b2ii) at (285:\rrr) {$4$};
          \node[circle, draw, fill=white, inner sep=1pt] (c1i) at (315:\rrr) {$6$};
          \node[circle, draw, fill=white, inner sep=1pt] (c1ii) at (345:\rrr) {$4$};
          \node[circle, draw, fill=white, inner sep=1pt] (c2i) at (375:\rrr) {$3$};
          \node[circle, draw, fill=white, inner sep=1pt] (c2ii) at (405:\rrr) {$3$};
          % dist = 4
          \node (a1ir) at (70:\rrrr) {};
          \node (a1il) at (80:\rrrr) {};
          \node (a1iir) at (100:\rrrr) {};
          \node (a1iil) at (110:\rrrr) {};
          \node (a2ir) at (130:\rrrr) {};
          \node (a2il) at (140:\rrrr) {};
          \node (a2iir) at (160:\rrrr) {};
          \node (a2iil) at (170:\rrrr) {};
          \node (b1ir) at (190:\rrrr) {};
          \node (b1il) at (200:\rrrr) {};
          \node (b1iir) at (220:\rrrr) {};
          \node (b1iil) at (230:\rrrr) {};
          \node (b2ir) at (250:\rrrr) {};
          \node (b2il) at (260:\rrrr) {};
          \node (b2iir) at (280:\rrrr) {};
          \node (b2iil) at (290:\rrrr) {};
          \node (c1ir) at (310:\rrrr) {};
          \node (c1il) at (320:\rrrr) {};
          \node (c1iir) at (340:\rrrr) {};
          \node (c1iil) at (350:\rrrr) {};
          \node (c2ir) at (370:\rrrr) {};
          \node (c2il) at (380:\rrrr) {};
          \node (c2iir) at (400:\rrrr) {};
          \node (c2iil) at (410:\rrrr) {};
  
          \draw[thick] (0) -- (a);
          \draw[thick] (0) -- (b);
          \draw[thick] (0) -- (c);
          \draw[thick] (a) -- (a1);
          \draw[thick] (a) -- (a2);
          \draw[thick] (b) -- (b1);
          \draw[thick] (b) -- (b2);
          \draw[thick] (c) -- (c1);
          \draw[thick] (c) -- (c2);
          \draw[thick] (a1) -- (a1i);
          \draw[thick] (a1) -- (a1ii);
          \draw[thick] (a2) -- (a2i);
          \draw[thick] (a2) -- (a2ii);
          \draw[thick] (b1) -- (b1i);
          \draw[thick] (b1) -- (b1ii);
          \draw[thick] (b2) -- (b2i);
          \draw[thick] (b2) -- (b2ii);
          \draw[thick] (c1) -- (c1i);
          \draw[thick] (c1) -- (c1ii);
          \draw[thick] (c2) -- (c2i);
          \draw[thick] (c2) -- (c2ii);
          \draw[thick] (a1i) -- (a1ir);
          \draw[thick] (a1i) -- (a1il);
          \draw[thick] (a1ii) -- (a1iir);
          \draw[thick] (a1ii) -- (a1iil);
          \draw[thick] (a2i) -- (a2ir);
          \draw[thick] (a2i) -- (a2il);
          \draw[thick] (a2ii) -- (a2iir);
          \draw[thick] (a2ii) -- (a2iil);
          \draw[thick] (b1i) -- (b1ir);
          \draw[thick] (b1i) -- (b1il);
          \draw[thick] (b1ii) -- (b1iir);
          \draw[thick] (b1ii) -- (b1iil);
          \draw[thick] (b2i) -- (b2ir);
          \draw[thick] (b2i) -- (b2il);
          \draw[thick] (b2ii) -- (b2iir);
          \draw[thick] (b2ii) -- (b2iil);
          \draw[thick] (c1i) -- (c1ir);
          \draw[thick] (c1i) -- (c1il);
          \draw[thick] (c1ii) -- (c1iir);
          \draw[thick] (c1ii) -- (c1iil);
          \draw[thick] (c2i) -- (c2ir);
          \draw[thick] (c2i) -- (c2il);
          \draw[thick] (c2ii) -- (c2iir);
          \draw[thick] (c2ii) -- (c2iil);
        \end{tikzpicture}
      };
      \node at (-4,-3.35) {$P$};
      \node at (4,-3.35) {$\mathscr{E}_{2,2,1,P}$};
    \end{tikzpicture}
  \end{subfigure}
  \begin{subfigure}[b]{\textwidth}
    \centering 
    \begin{tikzpicture}
      \node at (-4,0) {
        \begin{tikzpicture}
          \pgfmathsetmacro{\unit}{1.2}
          % Define the vertices of the polytope
          \coordinate (A) at (0,0);
          \coordinate (B) at (\unit,0);
          \coordinate (C) at (0,\unit);
          \coordinate (D) at (\unit,2*\unit);
          \coordinate (E) at (3*\unit,3*\unit);
          \coordinate (F) at (4*\unit,\unit);
        
          % Draw the edges of the convex hull
          \draw[thick, fill=black!20!white] (A) -- (B) -- (F) -- (E) -- (D) -- (C) -- cycle;

          % Draw the vertices
          \foreach \point in {A, B, C, D, E, F}
            \fill[black] (\point) circle (2pt);
        
          % Optional: label the vertices
          \node[below left] at (A) {$(0,0)$};
          \node[below right] at (B) {$(1,0)$};
          \node[above left] at (C) {$(0,1)$};
          \node[above left] at (D) {$(1,2)$};
          \node[above] at (E) {$(3,3)$};
          \node[right] at (F) {$(4,1)$};
        \end{tikzpicture}
      };
      \node at (4,0) {
        \begin{tikzpicture}
          \pgfmathsetmacro{\r}{1}
          \pgfmathsetmacro{\rr}{1.85}
          \pgfmathsetmacro{\rrr}{2.45}
          \pgfmathsetmacro{\rrrr}{3}
          % dist = 0
          \node[circle, draw, fill=white, inner sep=0.8pt] (0) at (0,0) {$3$};
          % dist = 1
          \node[circle, draw, fill=white, inner sep=1pt] (a) at (120:\r) {$4$};
          \node[circle, draw, fill=white, inner sep=1pt] (b) at (240:\r) {$4$};
          \node[circle, draw, fill=white, inner sep=1pt] (c) at (360:\r) {$4$};
          % dist = 2
          \node[circle, draw, fill=white, inner sep=1pt] (a1) at (90:\rr) {$5$};
          \node[circle, draw, fill=white, inner sep=1pt] (a2) at (150:\rr) {$5$};
          \node[circle, draw, fill=white, inner sep=1pt] (b1) at (210:\rr) {$5$};
          \node[circle, draw, fill=white, inner sep=1pt] (b2) at (270:\rr) {$5$};
          \node[circle, draw, fill=white, inner sep=1pt] (c1) at (330:\rr) {$5$};
          \node[circle, draw, fill=white, inner sep=1pt] (c2) at (390:\rr) {$5$};
          % dist = 3 
          \node[circle, draw, fill=white, inner sep=1pt] (a1i) at (75:\rrr) {$7$};
          \node[circle, draw, fill=white, inner sep=1pt] (a1ii) at (105:\rrr) {$5$};
          \node[circle, draw, fill=white, inner sep=1pt] (a2i) at (135:\rrr) {$7$};
          \node[circle, draw, fill=white, inner sep=1pt] (a2ii) at (165:\rrr) {$5$};
          \node[circle, draw, fill=white, inner sep=1pt] (b1i) at (195:\rrr) {$7$};
          \node[circle, draw, fill=white, inner sep=1pt] (b1ii) at (225:\rrr) {$5$};
          \node[circle, draw, fill=white, inner sep=1pt] (b2i) at (255:\rrr) {$5$};
          \node[circle, draw, fill=white, inner sep=1pt] (b2ii) at (285:\rrr) {$7$};
          \node[circle, draw, fill=white, inner sep=1pt] (c1i) at (315:\rrr) {$7$};
          \node[circle, draw, fill=white, inner sep=1pt] (c1ii) at (345:\rrr) {$5$};
          \node[circle, draw, fill=white, inner sep=1pt] (c2i) at (375:\rrr) {$7$};
          \node[circle, draw, fill=white, inner sep=1pt] (c2ii) at (405:\rrr) {$5$};
          % dist = 4
          \node (a1ir) at (70:\rrrr) {};
          \node (a1il) at (80:\rrrr) {};
          \node (a1iir) at (100:\rrrr) {};
          \node (a1iil) at (110:\rrrr) {};
          \node (a2ir) at (130:\rrrr) {};
          \node (a2il) at (140:\rrrr) {};
          \node (a2iir) at (160:\rrrr) {};
          \node (a2iil) at (170:\rrrr) {};
          \node (b1ir) at (190:\rrrr) {};
          \node (b1il) at (200:\rrrr) {};
          \node (b1iir) at (220:\rrrr) {};
          \node (b1iil) at (230:\rrrr) {};
          \node (b2ir) at (250:\rrrr) {};
          \node (b2il) at (260:\rrrr) {};
          \node (b2iir) at (280:\rrrr) {};
          \node (b2iil) at (290:\rrrr) {};
          \node (c1ir) at (310:\rrrr) {};
          \node (c1il) at (320:\rrrr) {};
          \node (c1iir) at (340:\rrrr) {};
          \node (c1iil) at (350:\rrrr) {};
          \node (c2ir) at (370:\rrrr) {};
          \node (c2il) at (380:\rrrr) {};
          \node (c2iir) at (400:\rrrr) {};
          \node (c2iil) at (410:\rrrr) {};
  
          \draw[thick] (0) -- (a);
          \draw[thick] (0) -- (b);
          \draw[thick] (0) -- (c);
          \draw[thick] (a) -- (a1);
          \draw[thick] (a) -- (a2);
          \draw[thick] (b) -- (b1);
          \draw[thick] (b) -- (b2);
          \draw[thick] (c) -- (c1);
          \draw[thick] (c) -- (c2);
          \draw[thick] (a1) -- (a1i);
          \draw[thick] (a1) -- (a1ii);
          \draw[thick] (a2) -- (a2i);
          \draw[thick] (a2) -- (a2ii);
          \draw[thick] (b1) -- (b1i);
          \draw[thick] (b1) -- (b1ii);
          \draw[thick] (b2) -- (b2i);
          \draw[thick] (b2) -- (b2ii);
          \draw[thick] (c1) -- (c1i);
          \draw[thick] (c1) -- (c1ii);
          \draw[thick] (c2) -- (c2i);
          \draw[thick] (c2) -- (c2ii);
          \draw[thick] (a1i) -- (a1ir);
          \draw[thick] (a1i) -- (a1il);
          \draw[thick] (a1ii) -- (a1iir);
          \draw[thick] (a1ii) -- (a1iil);
          \draw[thick] (a2i) -- (a2ir);
          \draw[thick] (a2i) -- (a2il);
          \draw[thick] (a2ii) -- (a2iir);
          \draw[thick] (a2ii) -- (a2iil);
          \draw[thick] (b1i) -- (b1ir);
          \draw[thick] (b1i) -- (b1il);
          \draw[thick] (b1ii) -- (b1iir);
          \draw[thick] (b1ii) -- (b1iil);
          \draw[thick] (b2i) -- (b2ir);
          \draw[thick] (b2i) -- (b2il);
          \draw[thick] (b2ii) -- (b2iir);
          \draw[thick] (b2ii) -- (b2iil);
          \draw[thick] (c1i) -- (c1ir);
          \draw[thick] (c1i) -- (c1il);
          \draw[thick] (c1ii) -- (c1iir);
          \draw[thick] (c1ii) -- (c1iil);
          \draw[thick] (c2i) -- (c2ir);
          \draw[thick] (c2i) -- (c2il);
          \draw[thick] (c2ii) -- (c2iir);
          \draw[thick] (c2ii) -- (c2iil);
        \end{tikzpicture}
      };
      \node at (-4,-3.35) {$P'$};
      \node at (4,-3.35) {$\mathscr{E}_{2,2,1,P'}$};
    \end{tikzpicture}
  \end{subfigure}
  \caption{Polytopes and some values of $\mathscr{E}_{2,2,1,P}$ displayed on
  lattices in the affine building of type $\widetilde{\mathsf{A}}_1$ associated
  with the group $\GSp_2(\Q_p) \cong \GL_2(\Q_p)$. The center vertex corresponds
  to the homothety class of the identity, and the values are the linear
  coefficients of the Ehrhart polynomials with respect to the corresponding
  lattices.}
  \label{fig:eigenfunctions}
\end{figure}

\subsection{Local Ehrhart--Hecke zeta functions}

For $n\in[3]$ and $\ell\in[2n]_0$, we record the rational functions
$W_{n,\ell}(X,Y)\in\Q(X,Y)$ where, for all primes, $\mathcal{Z}_{n, \ell,
p}^{\mathsf{C}}(s) = W_{n,\ell}(p, p^{-ns})$. We computed these with
SageMath~\cite{sagemath}.
\begin{align*}
  W_{1,\ell}(X,Y) &= \dfrac{1}{(1 - XY) (1 - X^{\ell}Y)} \\ 
  W_{2,\ell}(X,Y) &= \dfrac{1-X^{2+\ell}Y^2}{(1 -
  X^2Y) (1 - X^3Y) (1 - X^{\ell}Y) (1 - X^{\ell+1}Y)} \\
  W_{3, \ell}(X, Y) &= \dfrac{1 + (X^{1+\ell} + X^4)Y - A_{\ell}(X)Y^2 + (X^{6+2\ell} + X^{9+\ell})Y^3 + X^{10+2\ell}Y^4}{(1 - X^3Y) (1 - X^5Y) (1 - X^6Y) (1 - X^{\ell}Y) (1 - X^{2+\ell}Y) (1 - X^{3+\ell}Y)} \\
  W_{4, \ell}(X, Y) &= \dfrac{N_{4, \ell}(X, Y)}{D_{4, \ell}(X, Y)},
\end{align*}
where $A_{\ell}(X) = X^{7+\ell} + 2X^{6+\ell} + 2X^{4+\ell} + X^{3+\ell}$,
\begin{align*} 
  N_{4, \ell}(X, Y) &= 1 + (X^5+X^6+X^7+X^8+X^{1+\ell}+X^{2+\ell}+X^{3+\ell}+X^{4+\ell})Y + (X^{13} \\
  &\qquad - X^{4+\ell} - 2X^{5+\ell} - 2X^{6+\ell} - 2X^{7+\ell} - 2X^{8+\ell} - 2X^{9+\ell} - 3X^{10+\ell} \\
  &\qquad - 2X^{11+\ell} - 2X^{12+\ell} - 2X^{13+\ell} - X^{14+\ell} + X^{5 + 2\ell})Y^2 + (X^{14 + \ell} \\
  &\qquad - X^{18 + \ell} + X^{10+2\ell} - X^{14+2\ell})Y^3 - (X^{23+\ell} - X^{14+2\ell} - 2X^{15+2\ell} \\
  &\qquad - 2X^{16+2\ell} - 2X^{17+2\ell} - 3X^{18+2\ell} - 2X^{19+2\ell} - 2X^{20+2\ell} - 2X^{21+2\ell} \\
  &\qquad - 2X^{22+2\ell} - 2X^{23+2\ell} - X^{24+2\ell} + X^{15+3\ell})Y^4 - (X^{24+2\ell} + X^{25+2\ell} \\
  &\qquad + X^{26+2\ell} + X^{27+2\ell} + X^{20+3\ell} + X^{21+3\ell} + X^{22+3\ell} + X^{23+3\ell})Y^5 \\
  &\qquad - X^{28+3\ell}Y^6,  \\ 
  D_{4,\ell}(X, Y) &= (1-X^4Y) (1-X^7Y) (1-X^9Y) (1-X^{10}Y) \\
  &\qquad \times (1-X^{\ell}Y) (1-X^{3+\ell}Y) (1-X^{5+\ell}Y) (1-X^{6+\ell}Y) .
\end{align*}

\printbibliography

\end{document}